\newcommand{\comment}[1]{}
\newtheorem{theorem}{Theorem}
\newtheorem{definition}[theorem]{Definition}
\newtheorem{proposition}[theorem]{Proposition}
\newtheorem{lemma}[theorem]{Lemma}
\newtheorem{corollary}[theorem]{Corollary}
\newtheorem{problem}[theorem]{Problem}
\newtheorem{example}[theorem]{Example}
\newtheorem{remark}[theorem]{Remark}
\newtheorem{condition}[theorem]{Condition}
\newcommand{\inv}{^{-1}}
\newcommand\supp{\qopname\relax o{supp}}
\newcommand\con{\qopname\relax o{con}}
\newcommand\n{\nu}
\newcommand\f{\varphi}
\newcommand\Ne{\mathcal{N}_e}
\newcommand{\NN}{\mathbb N}
\newcommand{\ZZ}{\mathbb Z}
\newcommand{\RR}{\mathbb R}
\newcommand{\CC}{\mathbb C}
\newcommand{\TT}{\mathbb T}
\newcommand{\QQ}{\mathbb Q}
\newcommand\CF{\mathcal{C}}
\newcommand\MM{\mathcal{M}}
\newcommand\KK{\mathcal{K}}
\newcommand\Om{\Omega}
\newcounter{reb}
\newcounter{rev}
\newcounter{rem}
\newcounter{rec}
\newcommand{\beql}[1]{\begin{equation}\label{#1}}
\newcommand{\eeq}{\end{equation}}
\newcommand{\ve}{\varepsilon}
\newcommand{\FF}{{\mathcal F}}
\begin{document}

\title[Pointwise maximal value problem in groups]{The point value maximization problem for \\ positive definite functions supported in \\ a given subset of a locally compact group}


\author{S\'andor Krenedits and Szil\'ard Gy. R\'ev\'esz}\thanks{Supported in part by the Hungarian National Foundation for Scientific Research, Project \#'s
K-100461, 
NK-104183,
K-109789. 
}

\date{\today}

\address{
\newline \indent A. R\'enyi Institute of Mathematics
\newline \indent  Hungarian Academy of Sciences
\newline \indent Budapest, Re\'altanoda utca 13--15.
\newline \indent 1053 HUNGARY
}
\email{krenedits@t-online.hu, revesz.szilard@renyi.mta.hu}

\begin{abstract}
The century old extremal problem, solved by Carath\'eodory and
Fej\'er, concerns a nonnegative trigonometric polynomial
$T(t)=a_0+\sum_{k=1}^n a_k\cos (2\pi k t) + b_k \sin(2\pi kt)
\geq 0$, normalized by $a_0=1$, and the quantity to be
maximized is the coefficient $a_1$ of $\cos (2\pi t)$.
Carath\'eodory and Fej\'er found that for any given degree $n$
the maximum is $2 \cos(\frac{\pi}{n+2})$.

In the complex exponential form, the coefficient sequence
$(c_k)\subset \CC$ will be supported in $[-n,n]$ and normalized
by $c_0=1$. Reformulating, nonnegativity of $T$ translates to
positive definiteness of the sequence $(c_k)$, and the extremal
problem becomes a maximization problem for the value at $1$ of
a normalized positive definite function $c:\ZZ\to \CC$,
supported in $[-n,n]$.

Boas and Katz, Arestov, Berdysheva and Berens, Kolountzakis and
R\'ev\'esz and recently Krenedits and R\'ev\'esz investigated
the problem in increasing generality, reaching analogous
results for all locally compact Abelian groups. We prove an
extension to all the known results in not necessarily
commutative locally compact groups.
\end{abstract}

\maketitle

\vskip1em \noindent{\small \textbf{Mathematics Subject
Classification (2000):} Primary 43A35, 43A70. \\ Secondary 42A05, 42A82. \\[1em]
\textbf{Keywords:} Carath\'eodory-Fej\'er extremal problem, locally compact topological groups, abstract harmonic analysis, Haar measure, modular function, convolution of functions and of measures, positive definite functions, Bochner-Weil theorem, convolution
square, Fej\'er-Riesz theorem.}

\section{Introduction}\label{sec:intro}

In this work we consider locally compact groups. Our aim is to extend a number of results stemming from the classical trigonometric polynomial extremal problem of Carath\'eodory and Fej\'er, to the generality of locally compact, (but in general not Abelian) groups. Apart from the original works of Carath\'eodory \cite{Cara} and Fej\'er \cite{Fej}, our results will also contain corresponding results of \cite{ABB}, \cite{BK}, \cite{kolountzakis:pointwise}, and \cite{KR}.

A century ago Carath\'eodory and Fej\'er originally addressed
the question of maximizing the coefficient of $\cos x$ in a
$2\pi$-periodic trigonometric polynomial $T(x)\geq 0$ of degree
at most $N$ and normalized to have constant term 1. Our
starting point is the observation, that the classical problem
of Carath\'eodory and Fej\'er (and many others) can be
formulated in the following fairly general way even in not
necessarily Abelian (topological) groups.

\begin{problem}\label{p:ptwgeneral} Let $\Omega\subset G$ be a given set in the group $G$ and let $z\in \Omega$ be fixed. Consider a \emph{positive definite function} $f: G\to \CC$ (or $\to \RR$), normalized to have $f(e)=1$ for the unit element $e \in G$, and \emph{vanishing outside of $\Omega$}. How large can then $|f(z)|$ be?
\end{problem}

Actually, the Carath\'eodory-Fej\'er extremal problem -- as is briefly discussed below -- corresponds to the special case when $G=\ZZ$ and $\Om=[-N,N]$. Therefore, a proper generalization to groups is already obtained assuming also the following restriction.

\begin{condition}\label{c:CF} Assume that there is a natural number $N\in \NN$ satisfying
\begin{equation}\label{CFcondition}
\langle z \rangle \cap \Omega = \{z^n~:~ -N\leq n \leq N \}, \qquad \textrm{where} \quad \langle z \rangle := \{ z^n~:~n\in \ZZ\}.
\end{equation}
\end{condition}

The analogous problem of maximizing $\int_\Omega f$ under similar hypothesis was recently well investigated by several authors under the name of "Tur\'an's extremal problem"--which terminology originated from the paper of Stechkin \cite{stechkin:periodic}--although later it turned out that the problem was already considered well before Tur\'an, see the detailed survey \cite{LCATuran}. The problem in our focus, in turn, was also investigated on various classical groups (the Euclidean space, $\ZZ^d$ and $\TT^d$ being the most general ones) and was also termed by some as "the pointwise Tur\'an problem", but the paper \cite{kolountzakis:pointwise} traced it back to Boas and Kac \cite{BK} in the 1940's and even to the work of Carath\'eodory \cite{Cara} and Fej\'er \cite{Fej} \cite[I, page 869]{Fgesamm} as early as in the 1910's.

Let us admit right here that although \cite{kolountzakis:pointwise} extended the problem even to sets $\Omega$ in classical groups \emph{not satisfying} Condition \ref{c:CF}, in the generality of non-Abelian groups we cannot describe the problem without any extra assumption yet. Still, the unconditional extension can be handled in locally compact \emph{Abelian} groups, to which we refer the reader to \cite{KR}.

On the other hand here we consider locally compact groups
lacking any assumption about commutativity. Correspondingly, we
write the group operation the usual multiplicative way, and
investigate Problem \ref{p:ptwgeneral} either under the
Carath\'eodory-Fej\'er Condition, i.e. Condition \ref{c:CF},
or, in Sections \ref{sec:mainresults} and \ref{sec:mainproofs},
under another assumption defined later as \emph{roundness}, see
Definition \ref{def:round}. As we will explain in the sequel,
this latter extension is still general enough to cover all the
known extensions, including the unconditional results of
\cite{KR} for LCA groups.

To the best of our knowledge, this is the first attempt to deal with such extremal problems -- including other extremal problems of the "Tur\'an type", as mentioned above -- in the wider generality of \emph{not} necessarily Abelian locally compact groups. Yet, the extension is very natural, because all ingredients, in particular also positive definiteness, is naturally defined on all groups, not just on Abelian groups. It would be interesting to decide if these results extend to arbitrary $\Om$ and $z\in \Om$, without any additional conditions used e.g. in this paper.

We termed Problem \ref{p:ptwgeneral} -- at least under Condition \ref{c:CF} -- the \emph{Carath\'eodory-Fej\'er type extremal problem on $G$ for $z$ and $\Om$}. Since Carath\'eodory and Fej\'er worked on their extremal problem well before the notion of positive definiteness was introduced at all, this may require some explanation. Hewitt-Ross \cite[p.325]{HewittRossII} gives a detailed account of how the development of the notion of positive definiteness was ignited by such extremal problems and in particular by the work of Carath\'eodory himself.

Basically, the explanation is that nonnegativity of a trigonometric polynomial $T(x)=\sum_k c_k e^{2\pi i k x}$ can be equivalently spelled out as \emph{positive definiteness} of the coefficient sequence $(c_k)$ on $\ZZ$, an observation first made by Toeplitz, see \cite[p. 325]{HewittRossII} and \cite{Toeplitz}. Also, the information that the degree of the polynomial $T$ is at most $N$ means that $\supp c_k \subset [-N,N]$, and with $z:=1\in \ZZ$ this is easily seen to match the condition formulated in \eqref{CFcondition} with $\Om:=[-N,N]$. For more details about this interpretation we refer the reader to \cite{KR}.

For further use we also introduce the extremal problems
\begin{align}\label{eq:CForigdef}
\MM(\Om)& :=\sup \{ a(1) ~:~ a:[1,N]\to \RR,~ N\in \NN,~ a(n)=0 ~ (\forall n\notin \Om),
\\ &\qquad\qquad \qquad\qquad T(t):=1+\sum_{n=1}^N a(n) \cos(2\pi nt)\geq 0 ~ (\forall t\in \TT)\}, \notag
\end{align}
which is called in \cite{kolountzakis:pointwise} the \emph{Carath\'eodory-Fej\'er type trigonometric polynomial problem} and
\begin{align}\label{CFfinitemdef}
\MM_m(\Om)& :=\sup \{ a(1) ~:~ a:\ZZ_m\to \RR,~ a(0)=1, ~a(n)=0 ~ (\forall n\notin \Om),
\\ &\qquad\qquad \qquad\qquad T\left(\frac{r}{m}\right):=\sum_{n \mod m} a(n) \cos\left(\frac{2\pi n r}{m}\right) \geq 0 ~ (\forall r \mod m) \}. \notag
\end{align}
which is termed in \cite{kolountzakis:pointwise} as the
\emph{Discretized Carath\'eodory-Fej\'er type extremal
problem}. 

\begin{remark}\label{r:MmandM} Obviously we have $\MM_m(\Om) \ge \MM(\Om)$, because the restriction on the admissible class of positive definite functions to be taken into account is lighter for the discrete problem: we only require $T\left(\frac{r}{m}\right)\geq 0~(0\leq r <m)$, while for $\MM(\Om)$ the restriction is $T(t)\geq 0 ~(\forall t\in \TT)$.
\end{remark}

Note that Problem \ref{p:ptwgeneral} may have various
interpretations depending on how we define the exact class of
positive definite functions what we consider, what topology we
use on $G$, if any, and how we formulate the restrictions with
respect to the function $f$ "living" in $\Omega$ only, or
regarding "nicety" of $f$. In case of the analogous "Tur\'an
problem" when one maximizes the integral $\int_G f d\mu_G$
rather than just a fixed point value $|f(z)|$, consideration of
various classes are more delicate, see \cite[Theorem
1]{kolountzakis:groups}.

In the Carath\'eodory-Fej\'er extremal problem, however, the
general approach on LCA groups was found to be largely
indifferent to these issues in \cite{KR}. That will be the case
also for not necessarily commutative groups, so following
\cite{KR} let us restrict to the two extremal cases. That is,
denoting positive definiteness of a function $f$ by writing
$f\gg 0$, we define here only
\begin{eqnarray}
\label{Fjustpd} \FF_G^{\#}(\Om) &:=& \{f:G\to \CC~~:~~ f\gg 0, ~f(e)=1,~f(x)=0~\forall x\notin\Om\,\}~,
\\ \label{Fcontcompact} \FF^c_G(\Om) &:=& \{f:G\to \CC~~:~~ f\gg 0, ~f(e)=1,~ f\in C(G),~~\supp f
\Subset\Om~ \} \, .
\end{eqnarray}
Once again, note that the first formulation is absolutely free of any topological or measurability structure of the group $G$. On the other hand, equipping $G$ with the discrete topology the latter gives a formulation close to the former but with restricting $f$ to have finite support.

Let us call the attention to the fact that for $f \gg 0$ the support $\supp f$ is always a \emph{symmetric} closed subset of $G$ (as it follows immediately from \eqref{eq:ftildedef} and \eqref{eq:fequalsftilde} below). Therefore, given a subset $\Omega \subset G$, we can equivalently change it to the symmetric subset $\Omega \cap \Omega^{-1}$ in all these formulations. Further, either $e\in \Omega$, or the problem becomes trivial with $f$ not satisfying the normalization condition $f(e)=1$ and thus the respective function classes becoming $\FF_G^{\#}(\Om)=\FF^c_G(\Om)=\emptyset$. Consequently, we will assume throughout that the fundamental sets $\Omega \subset G$ are chosen to be symmetric subsets containing the unit element $e$.

The respective "Carath\'eodory-Fej\'er constants" are then
\begin{equation}\label{CFconstants}
\CF_G^{\#}(\Om,z)~ := \sup \bigg\{|f(z)|\,:~ f \in \FF_G^{\#}(\Om) \bigg\},
\quad \CF^c_G(\Om,z)~ := \sup \bigg\{|f(z)|\,:~ f \in \FF^c_G(\Om) \bigg\}.
\end{equation}
Note that for $G$ finite, the conditions if $f$ is continuous or supp$f$ is compact become automatically satisfied, whence there is no need to distinguish between these classes, and we can just write $\FF_G(\Om)(:=\FF^c_G(\Om)=\FF^{\#}_G(\Om))$ and $\CF_G(\Om,z)(:=\CF^c_G(\Om,z)=\CF^{\#}_G(\Om,z))$.

In view of \eqref{eq:posdefbdd} below, giving that for $f\gg 0$ $\|f\|_\infty=f(e)$, the {\em trivial estimate} or \emph{trivial (upper) bound} for the Carath\'eodory-Fej\'er constants is simply $\CF^c_G (\Om,z)\le  \CF_G^{\#}(\Om,z) \le 1$. Note that also the lower estimation $\CF_G^{\#}(\Om,z) \ge \CF^c_G (\Om,z) \ge 1/2$ can be worked out analogously to
\cite[Proposition 3.2]{KR}.

By the above general definition, for $G=\ZZ$ and $G=\ZZ_m:=\ZZ/m\ZZ$ the Carath\'eodory-Fej\'er constants \eqref{CFconstants} with $z:=1$ (and denoting by $H$ the fundamental set $\Om$ in this case) are
\begin{align}\label{eq:CFOmZ}
\CF^{\#}(H)& :=\CF^{\#}_\ZZ(H,1):=\sup \{ |\f(1)|~:~ \f\in \FF_\ZZ^{\#}(H) \}
\notag\\& :=\sup \{ |\f(1)|~~:~~ \f: \ZZ\to \CC, ~ \f\gg 0,  ~\f(0)=1,~ \supp \f \subset H \},
\notag \\ \CF^c(H)& :=\CF^c_\ZZ(H,1):=\sup \{ |\f(1)|~:~ \f\in \FF^c_\ZZ(H) \}
\\& :=\sup \{ |\f(1)|~~:~~ \f: \ZZ\to \CC, ~ \f\gg 0, ~\f(0)=1,~ \supp \f \subset H, ~ \# \supp \f <\infty  \}, \notag \\
\CF_m(H)& :=\CF^{\#}_{\ZZ_m}(H,1)=\CF^c_{\ZZ_m}(H,1):=\sup \{ |\f(1)|~:~ \f\in \FF_{\ZZ_m}(H) \}
\notag \\& :=\sup \{ |\f(1)|~~:~~ \f: \ZZ_m\to \CC, ~ \f\gg 0, ~\f(0)=1,~ \supp \f \subset H \}.\notag
\end{align}

Also the issue whether we consider complex valued functions or real valued functions, occurs naturally. As is discussed in \cite{KR}, for the Carath\'eodory-Fej\'er type problem the choice of function classes simplifies compared to the "Tur\'an problem", while the issue of considering real- or complex valued functions becomes less simple and in fact it splits in some cases while it remains equivalent for others.

So we put for any group, (and so in particular for $G=\ZZ$ and $G=\ZZ_m$)
\begin{equation}\label{eq:FGROm}
\FF_G^{\#\RR}(\Om):=\{\f: G\to \RR~:~\f\in \FF_G^{\#}(\Om)\},\qquad
\FF_G^{c\RR}(\Om):=\{\f: G\to \RR~:~\f\in \FF^c_G(\Om)\},
\end{equation}
and, as before, for finite groups we again simplify putting
$\FF_G^{\RR}(H):=\FF_G^{c\RR}(H)=\FF_G^{\#\RR}(H)$.
Then with this we can write
\begin{align}\label{CFKKconstants}
\KK^{\#}_G(\Om,z):=\sup_{\f\in\FF_G^{\#\RR}(\Om)}|\f(z)|,
&\quad \KK_G^c(\Om,z):=\sup_{\f \in \FF_G^{c\RR}(\Om)} |\f(z)|,
\\
\KK^{\#}(H):=\KK^{\#}_{\ZZ}(H,1), \quad
\KK^{c}(H):=\KK^{c}_{\ZZ}(H,1), &\quad
\KK_m(H):=\KK_{\ZZ_m}(H,1):=\sup_{\f\in \FF_{\ZZ_m}^{\RR}(H)} |\f(1)|.\notag
\end{align}

\begin{proposition}\label{p:realpd} We have for arbitrary symmetric $0\in H\subset \ZZ$ the following.
\begin{enumerate}
\item[{\it (i)}] $\MM(H\cap \NN)=2 \KK(H)$ and for all $m\in \NN$ $\MM_m(H\cap |0,m/2])=2 \KK_m(H)$.
\item[{\it (ii)}] We have $\KK^c(H)=\KK^{\#}(H)=\CF^c(H)=\CF^{\#}(H)$ (which can thus be denoted by $\CF(H)$ from now on).
\item[{\it (iii)}] For all $m\in \NN$, $\cos(\pi/m) \CF_m(H) \leq \KK_m(H) \leq \CF_m(H)$.
\item[{\it (iv)}][\textbf{Ruzsa}] If $4 \leq m\in \NN$, then in general {\it (iii)} is the best possible estimate with both inequalities being attained for some symmetric subset $H\subset \ZZ_m$.
\item[{\it (v)}] If $m=2,3$, then for any admissible $H$ we must have $H=\ZZ_m$ and thus $\f(x)\equiv 1$ shows $\CF_m(H)=\KK_m(H)=1$.
\end{enumerate}
\end{proposition}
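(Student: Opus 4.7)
The plan is to deduce (i) from the standard bijection between real even positive-definite sequences and nonnegative cosine trigonometric polynomials, to obtain (ii) and (iii) from two elementary devices---\emph{character rotation} and \emph{Schur-product truncation} by a Fej\'er triangle---and finally to dispose of (iv) and (v) by exhibiting explicit witnessing sets $H$.

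For (i), a real even $f\gg 0$ on $\ZZ$ with $f(0)=1$ and $\supp f\subset H$ corresponds via Fourier series to a cosine polynomial $T(t):=\sum_n f(n)e^{2\pi int}=1+2\sum_{n\ge 1} f(n)\cos(2\pi nt)$; by the Bochner--Herglotz theorem $T$ is nonnegative if and only if $f\gg 0$. The bijection matches $a(1)$ with $2f(1)$, and multiplication by the positive definite character $n\mapsto(-1)^n$ absorbs the sign of $f(1)$, yielding $\MM(H\cap\NN)=2\KK(H)$. The cyclic version is identical, with the caveat that for even $m$ the residue $m/2$ is self-inverse and contributes with a factor $1$ instead of $2$; this does not affect the index $n=1$, hence $\MM_m(H\cap (0,m/2])=2\KK_m(H)$.

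For (ii) and (iii), only the inequalities opposite to the trivial $\KK\le\CF$ require work. Given $f\gg 0$ on $\ZZ$ and a character $\chi(n)=e^{-i\alpha n}$, the product $\chi f$ is positive definite with the same support as $f$, and $\overline{\chi f}$ is positive definite too, so $h:=\Re(\chi f)=(\chi f+\overline{\chi f})/2$ is real positive definite of the same support with $h(0)=1$. Choosing $\alpha=\arg f(1)$ makes $h(1)=|f(1)|$, giving $\KK^{\#}(H)\ge\CF^{\#}(H)$. Multiplying further by the Fej\'er triangle $\Delta_N(n):=(1-|n|/N)_{+}$---positive definite as the Fourier sequence of the Fej\'er kernel---the Schur product $f\Delta_N$ is positive definite of finite support inside $H$, value $1$ at $0$, and $|(f\Delta_N)(1)|=|f(1)|(1-1/N)\to|f(1)|$, proving $\CF^c(H)\ge\CF^{\#}(H)$ (and similarly $\KK^c(H)\ge\KK^{\#}(H)$, since $\Delta_N$ is real). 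Chained with the trivial inequalities this proves (ii). For (iii), only the $m$ characters $\chi_k(n):=e^{2\pi ikn/m}$ are available on $\ZZ_m$; by pigeonhole we can pick $k$ so that $\arg(\chi_k(1)f(1))\in[-\pi/m,\pi/m]$, and then $\Re(\chi_k f)(1)\ge|f(1)|\cos(\pi/m)$, proving $\KK_m(H)\ge\cos(\pi/m)\CF_m(H)$.

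For (iv), the trivial $H=\ZZ_m$ with extremizer $\f\equiv 1$ saturates $\KK_m=\CF_m=1$. For the opposite extreme at even $m\ge 4$, take $H=\{0,\pm 1\}$: positive definiteness of $\f$ with $\f(0)=1$, $\f(1)=c$, $\f(-1)=\bar c$ reduces by the discrete Fourier transform to $1+2|c|\cos(\arg c-2\pi k/m)\ge 0$ for every $k\in\ZZ_m$. For complex $c$, the phase $\arg c$ can be chosen so that the closest of the $m$ angles $\arg c-2\pi k/m$ to $\pi$ lies at distance exactly $\pi/m$, yielding $\CF_m(H)=1/(2\cos(\pi/m))$; for real $c$ and $m$ even, the choice $k=m/2$ lands on $\pi$ exactly, forcing $\KK_m(H)=1/2$. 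The ratio $\KK_m(H)/\CF_m(H)=\cos(\pi/m)$ thus saturates (iii). For odd $m$ this particular $H$ does not separate $\KK_m$ from $\CF_m$, and a more elaborate construction attributed to Ruzsa is needed, which I would invoke without reconstructing; this is the main obstacle of the proof. Finally, for (v), symmetry of $H$ combined with nondegeneracy ($1\in H$, else $\f(1)=0$ trivially) forces $-1\in H$ too, so for $m=2$ or $3$ necessarily $H=\ZZ_m$, and then $\f\equiv 1$ attains $\CF_m(H)=\KK_m(H)=1$.
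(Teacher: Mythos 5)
Your proposal is correct in substance, but it is worth recording that the paper does not actually prove this proposition at all: its ``proof'' is a citation of Propositions 3.1 and 3.5 of \cite{KR}, with part \emph{(iv)} resting on an oral communication of I.~Z.~Ruzsa. So you have reconstructed, essentially from scratch, what the paper outsources. Your toolkit is exactly the standard one underlying \cite{KR}: the Herglotz correspondence between real even positive definite sequences and nonnegative cosine polynomials for \emph{(i)}; character rotation $f\mapsto \Re(e^{-i\alpha n}f)$ to pass from complex to real extremizers, which costs nothing on $\ZZ$ (all of $\TT=\widehat{\ZZ}$ is available) but costs a factor $\cos(\pi/m)$ on $\ZZ_m$ (only $m$ roots of unity available), giving \emph{(ii)} and \emph{(iii)}; and Schur multiplication by the Fej\'er triangle to replace $\#$-classes by finitely supported ones in \emph{(ii)}. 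All of these steps check out, and your explicit computation of $\CF_m(\{0,\pm1\})=1/(2\cos(\pi/m))$ versus $\KK_m(\{0,\pm1\})=1/2$ for even $m$ is a correct and clean witness that \emph{(iii)} is sharp in that case.

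Two caveats. First, the genuine gap you yourself flag: for odd $m\ge 5$ the set $\{0,\pm1\}$ gives $\KK_m=\CF_m$ and does not saturate the factor $\cos(\pi/m)$, so your proof of \emph{(iv)} is incomplete there; you invoke Ruzsa's construction without reconstructing it. Since the paper itself only attributes this to an unpublished communication, you are no worse off than the source, but the odd case remains unproved in your write-up. Second, a minor edge case in \emph{(i)}: your remark that the self-inverse residue $m/2$ ``does not affect the index $n=1$'' fails for $m=2$, where $1\equiv m/2$ and the factor $2$ in $\MM_m=2\KK_m$ breaks down (indeed $\MM_2(\{1\})=1$ while $\KK_2(\ZZ_2)=1$); this degenerate case is anyway covered by \emph{(v)}, but you should either exclude it or note the convention. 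With those two points addressed (or explicitly delegated to \cite{KR}), your argument is a valid and more self-contained substitute for the paper's citation.
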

\begin{proof}
This is a combination of \cite[Propositions 3.1]{KR} and
\cite[Propositions 3.5]{KR}, where in \emph{(iv)} an oral
communication of I. Z. Ruzsa is used, too.
\end{proof}

\section{Previous results on Carath\'eodory-Fej\'er type extremal problems}\label{sec:ptTuranintro}

As mentioned above, the development started with the extremal
problem of Carath\'eodory and Fej\'er, originally formulated
for positive trigonometric polynomials, see \cite{Cara, Fej} or \cite[vol. {\bf 1}, p.869]{Fgesamm}.

\begin{theorem}[{\bf Carath\'eodory and Fej\'er}]\label{th:CF}
If $T(t):=1+\sum_{n=1}^N a(n) \cos(2\pi nt)\geq 0 ~ (\forall t\in \TT)$, then $|a_1|\leq 2 \cos \left( \frac{\pi}{N+2}\right)$, and the bound is sharp. In other words, $\MM([1,N])=2 \cos \left( \frac{\pi}{N+2}\right)$.
\end{theorem}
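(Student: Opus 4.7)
The plan is to reformulate the problem via complex exponentials and apply the Fej\'er--Riesz factorization. Writing $T(t)=\sum_{n=-N}^{N}c_n e^{2\pi i n t}$ with $c_0=1$ and $c_n=a(|n|)/2$ for $n\neq 0$, the hypothesis $T\geq 0$ encodes positive definiteness of the sequence $(c_n)$ supported in $[-N,N]$, and since $a(1)=2c_1$ the desired bound becomes $|c_1|\leq \cos(\pi/(N+2))$. First I would invoke the Fej\'er--Riesz theorem to write $T(t)=|P(t)|^2$ with $P(t)=\sum_{k=0}^{N} p_k e^{2\pi i k t}$, which yields the autocorrelation identities $1=\sum_{k=0}^{N}|p_k|^2$ and $c_1=\sum_{k=0}^{N-1}p_{k+1}\overline{p_k}$.

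Next, I would reduce $|c_1|$ to a finite-dimensional spectral quantity. Setting $\mathbf{p}:=(p_0,\ldots,p_N)^{T}\in \CC^{N+1}$, one has $c_1=\inner{B\mathbf{p}}{\mathbf{p}}$, where $B$ is the $(N+1)\times(N+1)$ shift matrix with $B_{jk}=1$ iff $k=j+1$ and $0$ otherwise. Hence $|c_1|$ is bounded by the numerical radius $w(B)=\sup_{\Norm{\mathbf{p}}=1}|\inner{B\mathbf{p}}{\mathbf{p}}|$. A standard maneuver expresses $w(B)=\max_{\theta}\lambda_{\max}\bigl((e^{i\theta}B+e^{-i\theta}B^{*})/2\bigr)$, and a diagonal unitary conjugation by $D_{kk}=e^{ik\theta}$ shows that all these Hermitian matrices are unitarily equivalent to a single real symmetric tridiagonal matrix $H_0$ with $0$ on the diagonal and $1/2$ on both off-diagonals, so $w(B)=\lambda_{\max}(H_0)$.

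Finally, I would compute $\lambda_{\max}(H_0)$ explicitly. This tridiagonal matrix is classical: its eigenvalues are $\cos(k\pi/(N+2))$ for $k=1,\ldots,N+1$, with eigenvector components proportional to $\sin(jk\pi/(N+2))$. Thus $|c_1|\leq \cos(\pi/(N+2))$, and consequently $|a(1)|\leq 2\cos(\pi/(N+2))$. Sharpness follows by taking $(p_k)$ to be the top eigenvector, normalized to unit $\ell^2$ norm, and setting $T(t):=|P(t)|^2$: this produces a nonnegative trigonometric polynomial of degree $\leq N$ with $c_0=1$ and $c_1=\cos(\pi/(N+2))$, so equality is attained. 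The main obstacle is the combination of two classical but non-trivial ingredients---the Fej\'er--Riesz factorization and the explicit diagonalization of the tridiagonal second-difference matrix---together with the care needed to relate $|c_1|$ to the numerical radius of the non-self-adjoint shift $B$ rather than to one of its eigenvalues.
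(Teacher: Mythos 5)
Your proof is correct. The paper does not actually prove Theorem~\ref{th:CF} --- it quotes it as a classical result of Carath\'eodory and Fej\'er --- but your argument (Fej\'er--Riesz factorization $T=|P|^2$, reduction of $c_1=\sum p_{k+1}\overline{p_k}$ to the numerical radius of the nilpotent shift, and the explicit eigenvalues $\cos(k\pi/(N+2))$ of the tridiagonal matrix, with sharpness from the top eigenvector $p_k\propto\sin((k+1)\pi/(N+2))$) is essentially the standard classical proof, and all steps, including the diagonal-unitary reduction of $e^{i\theta}B+e^{-i\theta}B^{*}$ to $B+B^{*}$, check out.
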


Boas and Katz \cite{BK} used the result of Carath\'eodory and Fej\'er to prove the following.
\begin{theorem}[{\bf Boas-Katz}]\label{th:BK} Let $\Omega \subset \RR^d$ be a convex, symmetric, open, bounded set, i.e. one which generates a corresponding norm $\|\cdot\|=\|\cdot\|_\Om$ on $\RR^d$. Consider the Carath\'eodory-Fej\'er extremal problem Problem \ref{p:ptwgeneral}. Then with $\lceil x \rceil $ denoting upper integer part of $x$, we have
$$
\KK^{\#}_{\RR^d}(\Om,z)=\cos\left(\frac{\pi}{\lceil 1/\|z\| \rceil+1}\right).
$$
\end{theorem}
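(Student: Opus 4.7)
The plan is to prove the two matching inequalities separately, both driven by the identification of the cyclic subgroup $\langle z\rangle$ with $\ZZ$. Writing $N:=\lceil 1/\|z\|\rceil$, one has the key strict inequality $(N-1)\|z\|<1\le N\|z\|$, which is exactly the Carath\'eodory--Fej\'er Condition \ref{c:CF} applied along $\langle z\rangle$ with $M=N-1$; the target value $\cos(\pi/(N+1))=\cos(\pi/(M+2))$ is precisely what Theorem \ref{th:CF} predicts in that setting. (The case $z=0$ is trivial with value $1$, so assume $z\neq 0$.)

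For the upper bound, I would restrict an arbitrary $f\in\FF_{\RR^d}^{\#\RR}(\Om)$ to the cyclic subgroup $\langle z\rangle\cong\ZZ$. Since restriction of a positive definite function to a subgroup preserves positive definiteness, $\widetilde f(n):=f(nz)$ is real positive definite on $\ZZ$ with $\widetilde f(0)=1$, and $\widetilde f(n)=0$ for $|n|\ge N$ because then $\|nz\|_\Om=|n|\|z\|\ge 1$ puts $nz$ outside $\Om$. Applying Proposition \ref{p:realpd} together with Theorem \ref{th:CF} (with degree $N-1$) then yields
$$
|f(z)|=|\widetilde f(1)|\;\le\;\KK^{\#}([-(N-1),N-1])=\cos\bigl(\pi/(N+1)\bigr).
$$

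For the matching lower bound I would produce an explicit extremizer by a self-convolution construction. By Theorem \ref{th:CF} combined with the Fej\'er--Riesz factorization, choose real numbers $c_0,\ldots,c_{N-1}$ whose autocorrelation $g(m):=\sum_j c_j c_{j+m}$ satisfies $g(0)=1$ and $g(1)=\cos(\pi/(N+1))$. The slack $(N-1)\|z\|<1$ permits fixing $\delta>0$ with both $2\delta<\|z\|$ and $(N-1)\|z\|+2\delta<1$. Pick a real even $\psi\in C_c(\RR^d)\setminus\{0\}$ with $\supp\psi\subset B_\delta:=\{x:\|x\|_\Om<\delta\}$, and set
$$
\rho(x):=\sum_{k=0}^{N-1}c_k\,\psi(x-kz),\quad \widetilde\rho(x):=\rho(-x),\quad F:=\rho*\widetilde\rho,\quad f:=F/F(0).
$$
A direct expansion shows $F(x)=\sum_{|m|\le N-1} g(m)\,\Psi(x-mz)$ with $\Psi:=\psi*\widetilde\psi$ supported in $B_{2\delta}$. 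Since $2\delta<\|z\|$, the term $\Psi(x-mz)$ vanishes at $x=0$ unless $m=0$, and at $x=z$ unless $m=1$; hence $F(0)=\Psi(0)$ and $F(z)=g(1)\Psi(0)$, giving $f(z)=g(1)=\cos(\pi/(N+1))$. As a normalized convolution square of a real function, $f$ is real, continuous, and positive definite with $f(0)=1$. Finally, the $\|\cdot\|_\Om$-diameter of $\supp\rho$ is at most $(N-1)\|z\|+2\delta<1$, so $\supp f\subset\supp\rho-\supp\rho\subset B_1=\Om$ with strict inclusion, verifying $f\in\FF_{\RR^d}^{\#\RR}(\Om)$.

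The main delicate point is keeping $\supp f$ strictly inside the open set $\Om$ rather than merely in $\bar\Om$: this hinges on the strict inequality $(N-1)\|z\|<1$ that is baked into $N=\lceil 1/\|z\|\rceil$ and furnishes just enough room to absorb the blurring width $2\delta$ of the bump $\psi$. Convexity and central symmetry of $\Om$ enter only indirectly, via the standard properties of the Minkowski gauge $\|\cdot\|_\Om$ (subadditivity and $\|-x\|_\Om=\|x\|_\Om$), while openness of $\Om$ is used through the characterization of membership by $\|x\|_\Om<1$.
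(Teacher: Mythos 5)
Your proposal is correct and follows essentially the route the paper takes: the paper obtains Boas--Katz as the $\RR^d$ instance of its general Carath\'eodory--Fej\'er machinery (Condition \ref{c:CF} holds with $N=\lceil 1/\|z\|\rceil-1$, the upper bound comes from restricting $f$ to $\langle z\rangle\cong\ZZ$, and the lower bound from transplanting a Fej\'er--Riesz square root of the extremal sequence onto $G$ via a small positive definite bump). Your explicit $F=\rho*\widetilde\rho$ with $\rho=\sum_k c_k\,\psi(\cdot-kz)$ is exactly the abelian specialization of the paper's construction $F(x)=\sum_{j,k}\theta(k)\overline{\theta(j)}f(z^{-j}xz^{k})$, and your bookkeeping of the slack $(N-1)\|z\|+2\delta<1$ matches the paper's choice of a sufficiently small neighborhood $U$.
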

Observe that here $N< 1/\|z\|\leq N+1$ means by convexity and the definition of the norm that $\langle z \rangle \cap \Om = \{ nz~:~ n\in [-N,N] \}$, that is Condition \ref{c:CF}. As then also $\lceil 1/\|z\| \rceil=N+1$, by Proposition \ref{p:realpd} (i) the result is indeed the Euclidean space version of the Carath\'eodory-Fej\'er result on $\ZZ$. In view of Proposition \ref{p:realpd} (ii) this can be complemented by the equalities $\KK^{\#}_{\RR^d}(\Om,z)=\KK^c_{\RR^d}(\Om,z)= \CF^{\#}_{\RR^d}(\Om,z)=\CF^c_{\RR^d}(\Om,z)=
\cos\left(\frac{\pi}{\lceil 1/\|z\| \rceil+1}\right)$.

In particular, the result means that for an interval $(-h,h)\subset \RR$ and $z\in(-h,h)$ we have $\KK^{\#}_{\RR}((-h,h),z)=\KK^c_{\RR}((-h,h),z) =\CF^{\#}_{\RR}((-h,h),z)=\CF_{\RR}((-h,h),z)= \cos\left(\frac{\pi}{\lceil h/z \rceil+1}\right)$.

It is more difficult to deal with the case of the torus. The first results in this respect were obtained by Arestov, Berdysheva and Berens \cite{ABB}. A key observation is that for a symmetric, open set $\Om\subset (-1/2,1/2)^d$, we can consider it both as subset of $\RR^d$ and of $\TT^d$, and for the latter $\langle z \rangle \cap \Om $ always contains the one in the previous case. On the other hand if the order of $z\in\TT^d$ is a finite number $m$ -- which happens exactly when $z\in \QQ^d$ -- then we also have the effect mentioned in Remark \ref{r:MmandM}. Finally, the trigonometric versions and the exponential versions can be exchanged with each other, according to Proposition \ref{p:realpd} (i), so that finally $\KK_{\RR^d}^c(\Om,z)\leq \KK_{\TT^d}^c(\Om,z)$ follows, c.f. \cite[Proposition 6.1]{kolountzakis:pointwise}. As a corollary, improving upon the estimate of \cite[Theorem 3 (14)]{ABB}, it was also found that for a symmetric, open convex set $\Om\subset (-1/2,1/2)^d$, $\KK_{\TT^d}^c(\Om,z)\geq \cos \left(\frac{\pi}{\lceil 1/\|z\| \rceil+1}\right)$, see \cite[Corollary 6.2]{kolountzakis:pointwise}.

As mentioned above, it is difficult to compute exact extremal values if the order of $z$ is finite, say $o(z)=m$. More work was done on this for the case when $\Om=(-1/2,1/2)^d$, so in a sense only the "boundary" (only a smaller dimension subset) is removed from $\TT^d$. In this case it is known \cite[Theorem 7.3]{kolountzakis:pointwise} that for $z\not \in \QQ^d$ or $z \in \QQ^d$ but $z=(\frac{p_1}{q_1},\dots,\frac{p_d}{q_d})$ (in simplest terms) containing in the denominator exactly the same power $2^s$ of $2$ in each coordinates we have  $\KK_{\TT^d}^c(\Om,z)=1$, while for other $z\in \Om$ we have $\KK_{\TT^d}^c(\Om,z)=\frac12 (1+\cos(\frac{2\pi}{m}))$, where $m:=[q_1,\dots,q_d]$ is the least common multiple of the denominators in the coordinates of $z$. (Note that in this case we have that some of the denominators must be even, hence also $m$ is even and $m/2\in \NN$.) The key to the calculation is the computation in $\ZZ_m$ of the extremal value $\KK_{m}(\ZZ_m\setminus\{m/2\})=\frac12 \left(1+\cos(\frac{2\pi}{m})\right)$.

For a general interval $\Om=(-h,h)\subset \TT$ and the special value $z:=p/q$ with $p/q<h\leq (p+1)/q$ see the paper \cite{Ivanov}. (The result is quoted and explained also in \cite[Theorem 5.2]{KR}.)

Later developments surpassed the assumption of convexity or
restrictions of the kind of Condition \ref{c:CF}. Kolountzakis
and R\'ev\'esz \cite{kolountzakis:pointwise} already considered
open symmetric subsets $\Om\subset \RR^d$ or $\TT^d$, and
recently Krenedits and R\'ev\'esz \cite{KR} extended the
results to locally compact Abelian groups as well.

Perhaps the main result of \cite{kolountzakis:pointwise} is the understanding that the above point-value extremal problems in the more complicated groups $\RR^d$ and $\TT^d$ are in fact equivalent to the above trigonometric polynomial extremal problems on $\ZZ$ or $\ZZ_m$. Until that work the equivalence remained unclear in spite of the fact that, e.g., Boas and Kac found ways to deduce the solution of the trigonometric extremal problem \eqref{eq:CForigdef} from their results on Problem \ref{p:ptwgeneral}. The recent results of \cite{KR} capitalized on this observation when proving analogous equivalences even in LCA groups.

\begin{theorem}[{\bf Krenedits and R\'ev\'esz}]\label{th:KrenciLCA}
Let $G$ be any locally compact Abelian group and let
$\Om\subset G$ be an open (symmetric) neighborhood of $0$. Let
also $z\in \Om$ be any fixed point with $o(z)=\infty$, and
denote $H:=H(\Om,z):=\{k\in \ZZ~:~ kz\in \Om\}$. Then we have
\begin{align}\label{eq:KrenciLCA}
\CF^c_G(\Om,z) = \CF^{\#}_G(\Om,z) = \KK_G^{c}(\Om,z)=\KK_G^{\#}(\Om,z)=\CF(H).
\end{align}
\end{theorem}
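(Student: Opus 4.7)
The plan is to close a chain of inequalities. The trivial inclusions $\FF_G^{c\RR}(\Om) \subset \FF^c_G(\Om) \subset \FF_G^{\#}(\Om)$ and $\FF_G^{c\RR}(\Om) \subset \FF_G^{\#\RR}(\Om) \subset \FF_G^{\#}(\Om)$ immediately yield $\KK_G^c \le \KK_G^{\#} \le \CF_G^{\#}$ and $\KK_G^c \le \CF^c_G \le \CF_G^{\#}$. It therefore suffices to prove the two extremal bounds $\CF_G^{\#}(\Om,z) \le \CF(H)$ and $\CF(H) \le \KK_G^c(\Om,z)$, after which all four quantities are pinched equal to $\CF(H)$.

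For the upper bound, I would exploit that $o(z) = \infty$ makes the homomorphism $\iota: \ZZ \to G$, $k \mapsto kz$, injective. Given any $f \in \FF_G^{\#}(\Om)$, the pullback $\tilde f := f \circ \iota : \ZZ \to \CC$ is positive definite (apply the defining inequality of $f \gg 0$ to the points $x_i := n_i z \in G$), satisfies $\tilde f(0) = 1$, and is supported in $H = \iota^{-1}(\Om)$. Hence $\tilde f \in \FF_\ZZ^{\#}(H)$, and Proposition~\ref{p:realpd}(ii) gives $|f(z)| = |\tilde f(1)| \le \CF^{\#}(H) = \CF(H)$.

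For the lower bound I would lift an extremizer from $\ZZ$ back to $G$ by a convolution construction. By Proposition~\ref{p:realpd}(ii), for any $\ee > 0$ choose a real-valued positive definite $g: \ZZ \to \RR$ with finite support $F \subset H$, $g(0) = 1$, and $|g(1)| > \CF(H) - \ee$. Pick a continuous real-valued positive definite bump $\phi: G \to \RR$ (for instance the normalized convolution square $\phi := \chi_V * \chi_V / |V|$ for a symmetric open $V \ni 0$) with $\phi(0) = 1$ and $\supp \phi \subset W$, where $W$ is a symmetric open neighborhood of $0$ chosen small enough that (i) $kz + W \subset \Om$ for every $k \in F$, and (ii) the translates $kz + W$, $k \in F \cup (1 - F)$, are pairwise disjoint. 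Such $W$ exists because $F$ is finite, $\Om$ is open, and finitely many distinct points $kz$ can be separated in the Hausdorff group $G$. Set
\[
f(x) := \sum_{k \in F} g(k)\,\phi(x - kz).
\]
Then (ii) forces $f(0) = g(0)\phi(0) = 1$ and $f(z) = g(1)\phi(0) = g(1)$; (i) gives $\supp f \subset \Om$; and $f$ is continuous, compactly supported, and real-valued.

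The only nontrivial step, which I expect to be the main technical obstacle, is verifying $f \gg 0$. Writing $f = \mu * \phi$ with $\mu := \sum_{k \in F} g(k)\delta_{kz}$, the Fourier transform factorizes as $\hat f(\chi) = \hat\mu(\chi)\,\hat\phi(\chi)$ on $\hat G$. Bochner's theorem yields $\hat\phi \ge 0$, while $\hat\mu(\chi) = \sum_{k \in F} g(k)\chi(z)^{-k}$ is the nonnegative trigonometric polynomial $\sum_k g(k) w^k$ (nonnegative on $\TT$ because $g \gg 0$ on $\ZZ$) evaluated at $w = \chi(z)^{-1} \in \TT$. Thus $\hat f \ge 0$ pointwise on $\hat G$, and the Parseval-type identity
\[
\int\!\!\int f(x-y)\,\alpha(x)\,\overline{\alpha(y)}\,dx\,dy \;=\; \int_{\hat G}\hat f(\chi)\,|\hat\alpha(\chi)|^2\,d\chi \qquad (\alpha \in C_c(G))
\]
converts this into the integral form of positive definiteness of $f$. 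Therefore $f \in \FF_G^{c\RR}(\Om)$ with $|f(z)| = |g(1)| > \CF(H) - \ee$, and letting $\ee \to 0$ completes the proof.
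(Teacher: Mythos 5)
Your proof is correct, and your upper bound (restrict $f$ to the cyclic subgroup $\langle z\rangle$ and pull back along $k\mapsto kz$ to get an element of $\FF^{\#}_\ZZ(H)$) is exactly the paper's. The lower bound, however, takes a genuinely different route. The paper proves this statement as the Abelian special case (via Example \ref{ex:Abel}) of Theorem \ref{th:KrenciCFstrong}: there one first extracts a convolution square root $\theta$ of $\psi$ by the Fej\'er--Riesz lemma (Lemma \ref{l:FR}) and forms $F(x)=\sum_{j,k}\theta(k)\overline{\theta(j)}f(z^{j}xz^{-k})$, whose positive definiteness is the purely algebraic Lemma \ref{l:furtherposdeffunctions}\,(iv), valid in any group; commutativity (more generally, roundness of $z$) is used only to control the support of $F$ and its values at $e$ and $z$. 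You instead build $f=\sum_{k}g(k)\,\phi(\cdot-kz)=\mu*\phi$ directly and certify positive definiteness on the dual group, via $\hat f=\hat\mu\,\hat\phi\ge 0$ and Bochner. In the LCA case the two constructions actually produce the same function (the paper's double sum collapses to $\sum_{n}\overline{\psi(n)}f(x+nz)$), so the difference lies entirely in how $f\gg 0$ is verified: your route is more elementary for LCA groups, needing only nonnegativity of the trigonometric polynomial attached to $g$ rather than its Fej\'er--Riesz factorization, but it is irrevocably tied to the existence of $\widehat G$ and does not survive the passage to non-commutative groups, which is the paper's main concern. Two details worth stating explicitly: $\phi\gg 0$ requires $V$ symmetric (so that $\chi_V*\chi_V=\chi_V*\widetilde{\chi_V}$), which you do assume; and the step from $\hat f\ge 0$ to $f\gg 0$ uses the standard equivalence of integral and pointwise positive definiteness for continuous $L^1$ functions, which deserves a citation but is not a gap.
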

Recall that in $\ZZ$ we have
$\CF(H):=\CF^c(H)=\CF^{\#}(H)=\KK^c(H)=\KK^{\#}(H)=:\KK(H)$.
\begin{theorem}[{\bf Krenedits and R\'ev\'esz}]\label{th:KrenciLCAfini}
Let $G$ be any locally compact Abelian group and let
$\Om\subset G$ be an open (symmetric) neighborhood of $0$. Let
also $z\in \Om$ be any fixed point with $o(z)=m<\infty$, and
denote $H_m:=H_m(\Om,z):=\{k\in \ZZ_m~:~ kz \in \Om\}$. Then we have
\begin{equation}\label{eq:KrenciLCAfini}
\CF^c_G(\Om,z) = \CF^{\#}_G(\Om,z) =\CF_m(H_m) \quad {\rm and}\quad \KK_G^{c}(\Om,z)=\KK_G^{\#}(\Om,z) =\KK_m(H_m).
\end{equation}
\end{theorem}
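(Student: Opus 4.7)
My plan is to prove the chains $\CF_m(H_m)\le \CF^c_G(\Om,z)\le \CF^{\#}_G(\Om,z)\le \CF_m(H_m)$ and the analogous $\KK$-chain, where the middle inequality in each follows from the trivial class inclusion $\FF^c_G(\Om)\subset\FF^{\#}_G(\Om)$ (and its real-valued variant). Hence the work reduces to the \emph{restriction bound} $\CF^{\#}_G(\Om,z)\le \CF_m(H_m)$ and the \emph{lifting bound} $\CF^c_G(\Om,z)\ge \CF_m(H_m)$.

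For the restriction bound, given any $f\in \FF^{\#}_G(\Om)$, I would set $a(k):=f(z^k)$ for $k\in\ZZ_m$ (well-defined since $o(z)=m$). The restriction of a positive definite function on $G$ to the subgroup $\langle z\rangle\cong\ZZ_m$ remains positive definite; one has $a(0)=f(e)=1$; and for $k\notin H_m$ we have $z^k\notin\Om$, forcing $a(k)=f(z^k)=0$. Thus $a\in\FF_{\ZZ_m}(H_m)$ and $|f(z)|=|a(1)|\le \CF_m(H_m)$.

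For the lifting bound, given $a\in\FF_{\ZZ_m}(H_m)$, I would use $\widehat a\ge 0$ on $\widehat{\ZZ_m}$ to factor $a=b\ast\widetilde b$ via $\widehat b:=\sqrt{\widehat a}$, obtaining $\|b\|_2=1$. Since the finite subgroup $\langle z\rangle$ is discrete and closed in the Hausdorff group $G$, by openness of $\Om$ and finiteness of $H_m$ I would pick a symmetric, relatively compact, open neighborhood $V$ of $e$ with
\[
V^2\cap \langle z\rangle=\{e\}\quad\text{and}\quad \bigcup_{j\in H_m} z^j V^2\subset\Om.
\]
Then I would put $\phi:=\mu_G(V)^{-1/2}\mathbf{1}_V$ (so $\|\phi\|_2=1$), $\nu:=\sum_{k\in\ZZ_m} b(k)\,\delta_{z^k}$, $g:=\phi\ast\nu$, and finally $f:=g\ast\widetilde g$. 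Then $f$ is the convolution square of a compactly supported $L^2$ function, hence continuous, positive definite, and compactly supported. Using commutativity of the convolution on the LCA group $G$,
\[
f \;=\; (\phi\ast\widetilde\phi)\ast(\nu\ast\widetilde\nu) \;=\; \sum_{j\in\ZZ_m} a(j)\,(\phi\ast\widetilde\phi)(\cdot\, z^{-j}),
\]
where $\phi\ast\widetilde\phi$ is continuous, supported in $V^2$, equal to $1$ at $e$, and where $\nu\ast\widetilde\nu=\sum_j a(j)\delta_{z^j}$. The disjointness $V^2\cap\langle z\rangle=\{e\}$ then gives $f(z^k)=a(k)$ for each $k\in\ZZ_m$, in particular $f(e)=1$ and $f(z)=a(1)$; moreover $\supp f\subset \bigcup_{j:\,a(j)\neq 0} z^j V^2 \subset \bigcup_{j\in H_m} z^j V^2\subset\Om$. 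Thus $f\in\FF^c_G(\Om)$ with $|f(z)|=|a(1)|$; taking the supremum over admissible $a$ finishes the lifting bound. The real-valued ($\KK$) case is identical: if $a$ is real, then $\widehat a$ is real and even, so $\widehat b=\sqrt{\widehat a}$ is too, whence $b$, $g$, and $f$ are all real-valued.

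The one subtle point is the \emph{order of grouping} in the convolution: bounding $\supp f$ naively through $\supp g$ yields only $\bigcup_{k,l\in\supp b} z^{k-l}V^2$, which can spill outside $\Om$ whenever $\supp b-\supp b\not\subset H_m$. One must regroup as $\nu\ast\widetilde\nu=\sum_j a(j)\delta_{z^j}$ \emph{first}, so that only translates indexed by $j\in\supp a\subset H_m$ remain; this is precisely the set for which $V$ was tailored to satisfy $z^j V^2\subset\Om$. Everything else is a routine convolution-square construction, mirroring the LCA adaptation in \cite{KR} of the argument of \cite{kolountzakis:pointwise}.
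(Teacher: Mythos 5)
Your argument is correct and is essentially the construction the paper uses: the restriction to $\langle z\rangle\cong\ZZ_m$ gives the upper bound, and for the lower bound the $\ZZ_m$-convolution square root of $a$ is combined with a small bump $\phi\ast\widetilde{\phi}$, with the crucial regrouping of the double sum so that only translates indexed by $\supp a\subset H_m$ survive --- exactly the computation \eqref{eq:Fcompu} in the paper's more general Theorem \ref{th:KrenciStrongfini}, of which this statement is the Abelian special case. The only cosmetic differences are that you exploit commutativity of convolution where the paper uses conjugation-invariance of the bump (automatic for torsion elements by Example \ref{ex:ozm}), and that you should take $V$ compact (or pass to $\overline{V}$) so that $\supp(\phi\ast\widetilde{\phi})\subset V^{2}$ is closed, just as the paper does with its compact set $W$.
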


These results extended the corresponding Theorems 2.1 and 2.4
of \cite{kolountzakis:pointwise} to all LCA groups. Note that
these theorems are \emph{equivalence statements}, which greatly
decrease the complexity of the problems when reducing them to
given extremal problems on $\ZZ$ or $\ZZ_m$, yet they do not
necessarily \emph{solve} them in the sense of providing the
exact numerical value of the extremal quantity. Nevertheless,
it already follows that for $o(z)=\infty$ all the four versions
of the Carath\'eodory-Fej\'er extremal constants, listed in
\eqref{eq:KrenciLCA}, are equal -- and thus can be denoted by
$\CF_G(\Om)$ in general-- while for $o(z)<\infty$ at least the
two complex, resp. two real versions coincide and thus can be
denoted as $\CF_G(\Om)$ and $\KK_G(\Om)$, respectively. One of
the goals of the current work is to see this phenomenon in
non-commutative groups, too, under suitable conditions at
least, see later in particular in Corollaries
\ref{c:zinftyalleq} and \ref{c:ozfineq}. For the numerous
applications and exact computations or estimates of the
concrete values of these extremal constants see the original
papers and the references therein.

Also let us recall once again, that -- at least for the time
being -- we cannot obtain such fully general results for the
current non-commutative case: here we need to assume further
conditions, too, like e.g. Condition \ref{c:CF} or validity of
the condition of roundness, formulated in Definition \ref{def:round} appearing later. Still, our new results below will completely cover the theorems of Fej\'er and Carath\'eodory, Boas and Kac, Arestov, Berdysheva and Berens, Kolountzakis and R\'ev\'esz, and even of Krenedits and R\'ev\'esz, too.

\section{Formulation of the results under the Carath\'eodory-Fej\'er Condition}\label{s:result}

When the order of $z$ is infinite, the result will be an exact generalization of the Carath\'eodory-Fej\'er result.

\begin{theorem}\label{th:KrenciCF} Let $G$ be any locally compact topological group, with unit element $e$ and let $\Om\subset G$ be an open (symmetric) neighborhood of $e$. Let also $z\in \Om$ be any fixed point with $o(z)=\infty$, and assume that Condition \ref{c:CF} is satisfied with a certain $N$. Then we have
\begin{align}\label{eq:KrenciCF}
\CF^c_G(\Om,z) = \CF^{\#}_G(\Om,z) = \KK_G^{c}(\Om,z)=\KK_G^{\#}(\Om,z) = \cos\left(\frac{\pi}{N+2} \right).
\end{align}
\end{theorem}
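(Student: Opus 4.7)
The plan is to pinch all four constants between matching bounds, using the trivial chains $\KK_G^{c}(\Om,z)\le\KK_G^{\#}(\Om,z)\le\CF_G^{\#}(\Om,z)$ and $\KK_G^{c}(\Om,z)\le\CF_G^{c}(\Om,z)\le\CF_G^{\#}(\Om,z)$. It therefore suffices to prove the upper bound $\CF_G^{\#}(\Om,z)\le\cos(\pi/(N+2))$ together with the matching lower bound $\KK_G^{c}(\Om,z)\ge\cos(\pi/(N+2))$. The upper bound follows by restriction to the cyclic subgroup: for any $f\in\FF_G^{\#}(\Om)$, since $o(z)=\infty$ the map $n\mapsto z^n$ is an isomorphism $\ZZ\to\langle z\rangle$, so $\f(n):=f(z^n)$ is positive definite on $\ZZ$ with $\f(0)=1$ and, by Condition \ref{c:CF}, $\supp\f\subset\{-N,\dots,N\}$. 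Hence $\f\in\FF_\ZZ^{\#}([-N,N])$, and the classical Carath\'eodory-Fej\'er theorem (Theorem \ref{th:CF}) together with Proposition \ref{p:realpd} gives $|f(z)|=|\f(1)|\le\CF([-N,N])=\cos(\pi/(N+2))$.

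For the matching lower bound I would transplant the $\ZZ$-extremizer to $G$ by a Fej\'er-Riesz factorization and a ``smeared delta'' construction. Write the extremal nonnegative trigonometric polynomial as $T(t)=|p(e^{2\pi it})|^2$ with $p(u)=\sum_{k=0}^{N}b_k u^k$ real-coefficient and normalized so that $\sum_k b_k^2=1$; the sequence $\f(n):=\sum_{j-k=n}b_j b_k$ is then positive definite on $\ZZ$ with $\f(0)=1$, $\f(1)=\cos(\pi/(N+2))$, and $\supp\f\subset[-N,N]$. Since $G$ is Hausdorff and $o(z)=\infty$, the finite set $\{z^n:1\le|n|\le 2N\}$ consists of non-identity elements, hence is closed and bounded away from $e$; together with openness of $\Om$ and continuity of multiplication this lets me pick a symmetric compact neighborhood $V$ of $e$ satisfying simultaneously (i) $z^{j}V^{2}z^{-k}\subset\Om$ for every $j,k\in\{0,\dots,N\}$ and (ii) $V^{2}\cap\{z^n:1\le|n|\le 2N\}=\emptyset$. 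Choose a continuous real symmetric $\psi\ge 0$ with $\supp\psi\subset V$ and $\int\psi^{2}\,d\mu=1$, and set
\begin{equation*}
g(x):=\sum_{k=0}^{N}b_k\,\psi(z^{-k}x),\qquad f(y):=(g\ast g^{\dagger})(y)=\int_G g(v)\,g(y^{-1}v)\,d\mu(v),
\end{equation*}
where $g^{\dagger}(x):=g(x^{-1})$. Then $f$ is continuous, real-valued, and positive definite as the matrix coefficient $f(y)=\langle L_y g,g\rangle_{L^{2}(\mu)}$ of the left regular representation.

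To verify that $f\in\FF_G^{c\RR}(\Om)$ and achieves the target value, note first that $\supp g\subset\bigcup_{k=0}^{N}z^{k}V$, whence $\supp f\subset\bigcup_{j,k=0}^{N}z^{j}V^{2}z^{-k}\subset\Om$ by (i). Expanding $g$ and substituting $w:=z^{-j}v$ in the defining integral gives
\begin{equation*}
f(z^m)=\sum_{j,k=0}^{N}b_j b_k\int_G \psi(w)\,\psi(z^{j-k-m}w)\,d\mu(w),
\end{equation*}
and by (ii) the inner integral vanishes unless $j-k-m=0$, in which case it equals $\int\psi^{2}\,d\mu=1$. Hence $f(z^m)=\sum_{j-k=m}b_j b_k=\f(m)$ exactly for every $|m|\le N$; in particular $f(e)=1$ and $f(z)=\cos(\pi/(N+2))$, giving $\KK_G^{c}(\Om,z)\ge\cos(\pi/(N+2))$ and completing the pinch.

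The main technical point is the construction step in the non-commutative setting: controlling $\supp(g\ast g^{\dagger})$ under non-abelian convolution and ensuring the cross-integrals vanish for $j-k-m\ne 0$. The saving grace is that only products of powers of $z$ arise in the algebraic book-keeping, and these commute with one another, so the support containment reduces to the single geometric condition (i), while the separation condition (ii) is achievable because $\{z^n:1\le|n|\le 2N\}$ is a finite set of non-identity elements in a Hausdorff group. A potential worry---the modular function $\Delta$ in a non-unimodular group---does not intrude, since the matrix-coefficient form $f(y)=\langle L_y g,g\rangle_{L^{2}(\mu)}$ is defined purely in terms of the left Haar measure and automatically yields a continuous positive definite function.
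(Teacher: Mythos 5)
Your proposal is correct and is essentially the paper's own argument: the upper bound is obtained by the identical restriction of $f$ to the subgroup $\langle z\rangle\cong\ZZ$ followed by Carath\'eodory--Fej\'er, and the lower bound is the same Fej\'er--Riesz-plus-small-bump transplantation, since your $f=g\star\widetilde g$ with $g=\sum_k b_k\psi(z^{-k}\cdot)$ unwinds to exactly the paper's quadratic form $\sum_{j,k}\theta(k)\overline{\theta(j)}\,(\psi\star\widetilde\psi)(z^{-j}xz^{k})$ built on an atomic positive definite bump, with the same two smallness conditions on the neighborhood to control the support and kill the cross terms. The only (harmless) differences are cosmetic: you transplant just the extremizer rather than an arbitrary $\psi\in\FF^c_\ZZ([-N,N])$, and you use a continuous bump $\psi$ where the paper uses $\chi_W\star\chi_{W^{-1}}$.
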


So this also extends Theorem \ref{th:KrenciLCA} from LCA groups to not necessarily commutative locally compact groups in case \eqref{CFcondition} is satisfied.

If $z\in G$ is cyclic (has torsion), then we will show that Problem \ref{p:ptwgeneral} reduces to a well-defined discrete problem of the sort \eqref{CFfinitemdef}. Again, this is an extension of Theorem \ref{th:KrenciLCAfini} to not necessarily commutative locally compact groups in case \eqref{CFcondition} holds.

\begin{theorem}\label{th:KrenciCFfini} Let $G$ be any locally compact topological group, with unit element $e$ and let $\Om\subset G$ be an open (symmetric) neighborhood of $e$. Let also $z\in \Om$ be any fixed point with $o(z)=m<\infty$, and assume that Condition \ref{c:CF} is satisfied with some $N\leq m$. Then we have
\begin{equation}\label{eq:KrenciCFfini}
\CF^c_G(\Om,z) = \CF^{\#}_G(\Om,z) = \CF_m([-N,N]) \quad \textrm{and}\quad \KK_G^{c}(\Om,z)=\KK_G^{\#}(\Om,z) =\KK_m([-N,N]).
\end{equation}
\end{theorem}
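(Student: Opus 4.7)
My plan is to prove both equalities in \eqref{eq:KrenciCFfini} by reduction to the cyclic subgroup $H:=\langle z\rangle\cong\ZZ_m$, handling the upper and lower bounds separately. For the \textbf{upper bound}, I restrict: given $f\in\FF_G^{\#}(\Om)$, the pullback $g(k):=f(z^k)$ is a function on $\ZZ_m$, and testing positive definiteness of $f$ at the points $x_i=z^i$ shows $g\gg 0$ on $\ZZ_m$. Condition~\ref{c:CF} constrains $\supp g\subset[-N,N]\bmod m$, while $g(0)=1$ and $|g(1)|=|f(z)|$, so $|f(z)|\le\CF_m([-N,N])$. The same argument with real $f$ gives the $\KK$-inequality. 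Since $\FF_G^c\subset\FF_G^{\#}$ this yields the upper bound for both classes.

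For the \textbf{lower bound in the $\#$-class} I extend an almost extremal $g\gg 0$ on $\ZZ_m$ with $\supp g\subset[-N,N]$ by zero: $f(x):=g(k)$ for $x=z^k\in H$, $f(x):=0$ otherwise. The support lies in $\{z^n:|n|\le N\}\subset\Om$ by Condition~\ref{c:CF}, and to see $f\gg 0$ I would partition test points $x_1,\dots,x_n$ by left cosets $x_{\alpha_i}H$. Writing $x_i=x_{\alpha_i}z^{k_i}$, one has $x_j^{-1}x_i=z^{-k_j}(x_{\alpha_j}^{-1}x_{\alpha_i})z^{k_i}$, and --- here using crucially that $H$ is \emph{abelian} --- this lies in $H$ if and only if $\alpha_i=\alpha_j$, in which case it equals $z^{k_i-k_j}$. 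Thus the PD double-sum decouples across cosets into PD forms for $g$ on $\ZZ_m$, each nonnegative.

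The subtle step is the \textbf{lower bound in the $c$-class}. Naive extension-by-zero is not continuous, so I plan to smooth: regard $g$ as the coefficients of the discrete measure $\nu:=\sum_{|k|\le N}g(k)\,\delta_{z^k}$ on $G$ and form $F:=\eta*\nu*\tilde\eta$ for a suitable bump $\eta\in C_c(G)$. First I would show that $\nu$ is itself a positive definite measure on $G$ by decomposing $\phi\in C_c(G)\subset L^2(G)$ along the $m$ eigenspaces of the finite-order unitary left-translation operator $T_z$, collapsing $\int(\phi*\tilde\phi)\,d\nu$ via orthogonality to $\sum_{r=0}^{m-1}\|\phi_r\|_2^2\,\hat g(r)\ge 0$ since $\hat g\ge 0$ on $\widehat{\ZZ_m}$. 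Then $F$ is automatically continuous and positive definite by the standard two-sided convolution theorem.

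The \textbf{main obstacles} are in arranging $F(e)=1$, $F(z)=g(1)$ and $\supp F\subset\Om$ \emph{exactly}. Two non-commutative complications intervene. First, the convolution integrals produce terms of the form $\int\eta(u)\overline{\eta(z^{-1}uz^k)}\,d\mu(u)$ that must vanish for $k\not\equiv 1\pmod m$ and equal $1$ for $k=1$; I would resolve this by choosing $V$ and $\eta$ to be $\langle z\rangle$-conjugation invariant, taking $V:=\bigcap_{j=0}^{m-1}z^jV_0z^{-j}$ and averaging $\eta$ over $\langle z\rangle$-conjugation --- both finite operations preserving openness, positivity, and $L^2$-normalizability. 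Second, a right-translation substitution introduces a factor $\Delta(z^k)$ from the modular function; but since $\Delta\colon G\to\RR_{>0}$ is a homomorphism and $z^m=e$, the identity $\Delta(z)^m=1$ forces $\Delta(z^k)=1$ for all $k$. With $V$ additionally small enough that $z^jV^2\subset\Om$ for $|j|\le N$ and $V^2\cap z^\ell V^2=\emptyset$ for $\ell\not\equiv 0\pmod m$, a short calculation yields $F(e)=1$, $F(z)=g(1)$, and $\supp F\subset\bigcup_{|k|\le N}z^kV^2\subset\Om$. The real-valued $\KK$-versions follow verbatim with $g$ and $\eta$ taken real.
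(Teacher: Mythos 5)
Your proposal is correct, and for the decisive lower bound it takes a genuinely different route from the paper. The paper proves this theorem by specializing its more general Theorem \ref{th:KrenciStrongfini}: there the upper bound is the same restriction-to-$\langle z\rangle$ argument you give, but the lower bound first takes a convolution square root $\psi=\theta\star\widetilde{\theta}$ on $\ZZ_m$ (Lemma \ref{l:FRfinite}, the Fej\'er--Riesz step), builds a $z$-conjugation-invariant bump $f=\chi_V\star\chi_{V^{-1}}$ (possible since torsion elements are round, Example \ref{ex:ozm} and Proposition \ref{p:invariance}), and forms $F(x)=\sum_{j,k}\theta(k)\overline{\theta(j)}f(z^jxz^{-k})$, whose positive definiteness comes from Lemma \ref{l:furtherposdeffunctions}\,\emph{(iv)} and which collapses to $\sum_n\psi(n)f(z^{-n}x)$ by conjugation invariance. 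You avoid the square root entirely: you sandwich the discrete measure $\nu=\sum_k g(k)\delta_{z^k}$ as $\eta*\nu*\tilde\eta$ and verify positive definiteness spectrally, decomposing $L^2$ under the order-$m$ unitary translation by $z$ (unitarity of the right translation needs $\Delta(z)=1$, which you correctly extract from $\Delta(z)^m=1$). In fact, once $\eta$ is $z$-conjugation-invariant and $\Delta(z)=1$, your $\eta*\nu*\tilde\eta$ equals $\sum_n g(n)\,(\delta_{z^n}*\eta*\tilde\eta)$, i.e.\ literally the same function as the paper's $F$ after its simplification \eqref{eq:Fcompu}; only the verification of positive definiteness differs, and both verifications are sound. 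Your remaining ingredients --- the conjugation-invariant neighborhood $V=\bigcap_j z^jV_0z^{-j}$, the disjointness conditions forcing $F(e)=1$ and $F(z)=g(1)$, and the support estimate $\supp F\subset\bigcup_{|k|\le N}z^kVV^{-1}\subset\Om$ --- all match the paper's (i)--(iii). Two minor remarks: your separate zero-extension argument for the $\#$-class lower bound (which is the paper's Lemma \ref{l:extensions}) is logically redundant, since $\CF^c_G(\Om,z)\le\CF^{\#}_G(\Om,z)$ and the $c$-class lower bound already closes the chain; and in that coset decoupling what is actually used is only that $\langle z\rangle$ is a \emph{subgroup} (so that $z^{k_j}Hz^{-k_i}=H$), not that it is abelian.
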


\begin{remark}\label{r:CFresults} Note the slight difference between the two results. For $o(z)=\infty$ we get the exact value of $\cos\left(\frac{2\pi}{N+2} \right)$ of the extremal constant, while for $o(z)=m<\infty$ we only obtain an equivalence, but not the concrete value. This is a consequence of the fact that the numerical value of the Carath\'eodory-Fej\'er extremal problem is known, and also that for $o(z)=\infty$ the real and complex cases agree, while for the finite group analog neither the value(s) are known, nor equality of the complex and real settings is known to hold. It would be interesting to compute also this case, in particular as it depends now on two variables, $N$ and also $m$, which makes the calculation certainly nontrivial both for the real and also for the complex case. \\
Compare also to \cite{Ivanov}, where the "discrete Fej\'er type maximization problem" of maximizing the $\n¡^{\rm th}$ coefficient, i.e. computing $\KK_{\ZZ_m}([0,N],\nu)$ is formulated, and also computed for the special case of $\nu=N$, i.e. the last (and not the first) coefficient.
\end{remark}

The proofs of these results can be found in Section \ref{s:proof}.

\section{Preliminaries on positive definite functions on locally compact groups}\label{sec:posdefoverview}

Positive definite functions were introduced on $\ZZ$ by Toeplitz \cite{Toeplitz} in 1911 and on $\RR$ by Matthias in 1923 \cite{Mathias}. For general groups positive definite functions are defined by the property that
\begin{equation}\label{eq:posdefdfnd}
\forall n\in \NN, ~\forall x_1,\dots,x_n\in G, ~ \forall c_1,\dots,c_n\in \CC \qquad \qquad \sum_{j=1}^{n}\sum_{k=1}^{n} c_j \overline{c_k} f(x_jx_k^{-1}) \geq 0.
\end{equation}
In other words, positive definiteness of a real- or complex valued function $f$ on $G$ means that for all $n$ and all choices of $n$ group elements $x_1,\dots,x_n\in G$, the $n\times n$ square matrix $[f(x_jx_k^{-1})]_{j=1,\dots,n}^{k=1,\dots,n}$ is a positive (semi-)definite matrix. We will use the notation $f \gg 0$ for a short expression of the positive definiteness of a function $f:G\to \CC$ or $G\to \RR$.

Definition \eqref{eq:posdefdfnd} has some immediate consequences\footnote{These properties are basic and well-known, see e.g. \cite[(32.4) Theorem]{HewittRossII} or \cite[p. 84]{Folland}. We prove them just for being self-contained, as they are easy.}, the very first being that $f(e)\geq 0$ is nonnegative real (just take $n:=1$, $c_1:=1$ and $x:=e$).

For any function $f:G\to \CC$ the \emph{converse}, or \emph{reversed} function $\widetilde{f}$ (of $f$) is defined as
\begin{equation}\label{eq:ftildedef}
\widetilde{f}(x):=\overline{f(x^{-1})}.
\end{equation}
E.g. for the characteristic function $\chi_A$ of a set $A$ we have $\widetilde{\chi_A}=\chi_{A^{-1}}$ (where, as usual, $A^{-1}:=\{a^{-1}~:~a\in A\}$), because $x^{-1}\in A$ if and only if $x\in A^{-1}$.

Now let $f:G\to \CC$. Then in case $f$ is positive definite we necessarily have
\begin{equation}\label{eq:fequalsftilde}
f=\widetilde{f}.
\end{equation}

Indeed, take in the defining formula \eqref{eq:posdefdfnd} of positive definiteness $x_1:=e$, $x_2:=x$ and $c_1:=c_2:=1$ and also $c_1:=1$ and $c_2:=i$: then we get both $0\leq 2 f(e)+f(x)+f(x^{-1})$ entailing that $f(x)+f(x^{-1})$ is real, and also that $0\leq 2f(e)+if(x)-if(x^{-1})$ entailing that also $if(x)-if(x^{-1})$ is real. However, for the two complex numbers $v:=f(x)$ and $w:=f(x^{-1})$ one has both $v+w\in \RR$ and $i(v-w) \in \RR$ if and only if  $v=\overline{w}$.


Next observe that for any positive definite function $f:G\to \CC$ and any given point $z\in G$
\begin{equation}\label{eq:posdefbdd}
|f(z)|\leq f(e),
\end{equation}
and so in particular if $f(e)=0$ then we also have $f\equiv 0$. Indeed, let $z\in G$ be arbitrary: if $|f(z)|=0$, then we have nothing to prove, and if $|f(z)|\ne 0$, let $c_1:=1$, $c_2:=-\overline{f(z)}/|f(z)|$ and $x_1:=e$, $x_2:=z$ in \eqref{eq:posdefdfnd}; then recalling that according to \eqref{eq:fequalsftilde} $f(z^{-1})=\overline{f(z)}$, we obtain $0\leq 2f(e)+c_2 f(z)+\overline{c_2} f(z^{-1})=2f(e)-2|f(z)|$ and \eqref{eq:posdefbdd} follows.

Therefore, all positive definite functions are bounded and $\|f\|_\infty =f(e)$. That is an important property which makes the analysis easier: in particular, we immediately see that $|f(z)|$ cannot exceed 1 for $f\in \FF^{\#}_G(\Om)$, whence the above mentioned trivial upper bound.

If $u:G \to L(\mathcal H, \mathcal H)$ is a unitary representation of the group $G$ in the Hilbert space $\mathcal H$, and $v\in \mathcal H$ is a fixed vector, then it is easy to see that the function $x \to \langle u(x)v,v\rangle$ makes a positive definite function. In fact, all positive definite functions can be represented such a way, see \cite[(3.15) Proposition and (3.20) Theorem]{Folland} or better \cite[(32.3) Theorem (iii)]{HewittRossII}. As an easy consequence, any character $\gamma\in \widehat{G}$ of a LCA group $G$ is  positive definite.

We will make use of the following further constructions of positive definite functions.
\begin{lemma}\label{l:furtherposdeffunctions} Let $f,g :G\to \CC$ be arbitrary positive definite functions. Then we have
\begin{itemize}
\item[\it{(i)}]
If $H\leq G$ is a subgroup of $G$, and $h:=\chi_{H} g$, that is, $g|_{H}$ on $H$ and vanishing elsewhere, then also $h\gg 0$.
\item[\it{(ii)}] $\overline{f} \gg 0$, $f^{\star}(x):=f(x^{-1})\gg 0$ and $\Re f\gg 0$.
\item[\it{(iii)}]
If $\alpha, \beta>0$ are arbitrary positive constants, then $\alpha f + \beta g \gg 0$.
\item[\it{(iv)}]
For arbitrary $n\in \NN$, complex numbers $a_j\in \CC$ ($j=1,\dots,n$) and group elements $y_j\in G$ ($j=1,\dots,n$), the derived function $F(x):= \sum_{j=1}^n \sum_{k=1}^n \overline{a_j}a_k f(y_j^{-1}xy_k) \gg 0$.
\item[\it{(v)}] $fg \gg 0$.
\end{itemize}
\end{lemma}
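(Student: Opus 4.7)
\medskip

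\noindent\textbf{Plan of proof.} All five assertions follow essentially from the matrix-theoretic reformulation of positive definiteness: $f\gg 0$ on $G$ exactly when for every finite tuple $x_1,\dots,x_n\in G$ the Hermitian matrix $M(f):=[f(x_jx_k^{-1})]_{j,k=1}^n$ is positive semi-definite. Part \emph{(iii)} is immediate since the cone of PSD matrices is closed under nonnegative linear combinations, and it also instantly yields $\Re f=\tfrac12(f+\overline f)\gg 0$ in \emph{(ii)} once we know $\overline f\gg 0$. For $\overline f\gg 0$, the matrix $M(\overline f)=\overline{M(f)}$ is PSD because entrywise complex conjugation preserves positive semi-definiteness of Hermitian matrices; and for $f^\star(x):=f(x^{-1})$ one observes $M(f^\star)_{jk}=f(x_kx_j^{-1})=M(f)_{kj}=\overline{M(f)_{jk}}$ (the last equality by \eqref{eq:fequalsftilde}), so $M(f^\star)=\overline{M(f)}$ is again PSD.

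For \emph{(i)} the plan is to partition the index set $\{1,\dots,n\}$ according to the equivalence relation $j\sim k\iff x_jx_k^{-1}\in H$, which since $H$ is a subgroup is exactly the partition into right cosets $Hx_j$. The defining quadratic form of $M(h)$ splits along this partition into a block-diagonal sum, all off-diagonal blocks being zero (because those entries are $h(x_jx_k^{-1})=\chi_H(x_jx_k^{-1})g(\cdot)=0$). On each block the indices $j$ belonging to one coset can be written as $x_j=h_jy$ with $y$ a chosen coset representative and $h_j\in H$; then $x_jx_k^{-1}=h_jh_k^{-1}\in H$, so that block contributes $\sum c_j\overline{c_k}g(h_jh_k^{-1})$, which is nonnegative by $g\gg 0$ applied to the group elements $h_jh_k^{-1}\in H\subset G$. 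Summing over cosets gives $\sum c_j\overline{c_k}h(x_jx_k^{-1})\ge 0$.

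For \emph{(iv)} I will compute directly. Given arbitrary $x_1,\dots,x_n\in G$ and $c_1,\dots,c_n\in\CC$, expand
\begin{align*}
\sum_{l,m=1}^n c_l\overline{c_m}F(x_lx_m^{-1})
&=\sum_{l,m}\sum_{j,k}c_l\overline{c_m}\,\overline{a_j}a_k\,f\bigl(y_j^{-1}x_lx_m^{-1}y_k\bigr).
\end{align*}
The key algebraic identity is $y_j^{-1}x_lx_m^{-1}y_k=(y_j^{-1}x_l)(y_k^{-1}x_m)^{-1}$, so with the re-indexed points $z_{(j,l)}:=y_j^{-1}x_l$ and re-indexed coefficients $b_{(j,l)}:=c_l\overline{a_j}$, the sum takes the form $\sum b_{(j,l)}\overline{b_{(k,m)}}f(z_{(j,l)}z_{(k,m)}^{-1})\ge 0$ by $f\gg 0$.

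For \emph{(v)}, the plan is to invoke the Schur product theorem: the entrywise (Hadamard) product of two Hermitian PSD matrices is PSD. Applied to $M(f)$ and $M(g)$ for the same tuple $x_1,\dots,x_n$, the Hadamard product is exactly $M(fg)$, which is therefore PSD. The mild obstacle is that Schur's theorem is a linear-algebra fact external to the paper; I will quote it as the classical statement (proved by writing $M(g)=BB^*$ and using $(M(f)\circ BB^*)=\sum_r B^{(r)}M(f)(B^{(r)})^*$ with $B^{(r)}$ the diagonal of the $r$-th column, each summand being PSD). No other part poses a genuine obstacle, since all remaining arguments reduce to elementary manipulations of the defining inequality \eqref{eq:posdefdfnd}.
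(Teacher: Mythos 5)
Your proposal is correct, and it follows essentially the same route as the paper: parts \emph{(i)}--\emph{(iv)} by direct manipulation of the defining quadratic form \eqref{eq:posdefdfnd} (which the paper leaves to the reader, citing Hewitt--Ross), and part \emph{(v)} via the Schur product theorem applied to the Hadamard product $M(f)\circ M(g)=M(fg)$, exactly as the paper does. You have merely written out the details the authors omit, and all of your computations (the coset block-decomposition in \emph{(i)}, the identity $y_j^{-1}x_lx_m^{-1}y_k=(y_j^{-1}x_l)(y_k^{-1}x_m)^{-1}$ in \emph{(iv)}) check out.
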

\begin{proof} The parts {\it(i)--(iv)} are easy facts, which the reader will find no difficulty to check by a straightforward calculation, but we note that {\it(v)} is a nontrivial fact, which follows from the Schur Product Theorem: if the matrices $A, B\in \CC^{n\times n}$ are both positive definite, then so is their entrywise product $[a_{jk}b_{jk}]_{j=1,...,n}^{k=1,...,n}$, too. For this latter fact from linear algebra, see \cite[\S 85, Theorem 2]{Halmos} or \cite[(D.12) Lemma, Appendix D, pp. 683-684]{HewittRossII}.

All the statements {\it (i)-(v)} can be found in \cite[(32.8) (d)]{HewittRossII} and \cite[(32.9) Theorem]{HewittRossII}.
\end{proof}

As a converse to {\it (i)}, we can recall the following easy to prove observation of Hewitt, see \cite[(32.43) (a)]{HewittRossII}.

\begin{lemma}\label{l:extensions} Let $H\leq G$ be a subgroup of $G$ and let $h \gg 0$ on $H$. Furthermore, define $g :G\to \CC$ as its trivial extension:
\begin{equation}\label{eq:posdefextension}
g(x):=\begin{cases} h(x) \qquad &{\rm if}~ x\in H \\
0 &{\rm if}~ x\not\in H
\end{cases}.
\end{equation}
Then $g\gg 0$ on $G$.
\end{lemma}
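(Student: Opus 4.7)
The plan is to verify the defining inequality \eqref{eq:posdefdfnd} for $g$ directly, reducing the sum over arbitrary group elements to a sum over right cosets of $H$. The key observation is that, by the very definition of the trivial extension, $g(x_j x_k^{-1}) = h(x_j x_k^{-1})$ when $x_j x_k^{-1}\in H$ and $g(x_j x_k^{-1})=0$ otherwise; and $x_j x_k^{-1}\in H$ holds precisely when $x_j$ and $x_k$ belong to the same right coset $Hx_k$ of $H$ in $G$. So the support structure of $g$ is encoded by a block-diagonal pattern with respect to the right-coset equivalence on the chosen sample points.

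Concretely, given $n\in\NN$, $x_1,\dots,x_n\in G$ and $c_1,\dots,c_n\in\CC$, I would partition the index set $\{1,\dots,n\}$ into blocks $S_1,\dots,S_r$ according to which right coset of $H$ each $x_j$ lies in. Fixing a representative $y_\ell$ of the coset corresponding to $S_\ell$, for each $j\in S_\ell$ I write $x_j=h_j y_\ell$ with $h_j\in H$. Then $x_jx_k^{-1}=h_j h_k^{-1}\in H$ whenever $j,k\in S_\ell$, while every inter-block term $g(x_j x_k^{-1})$ vanishes. Consequently the full sum collapses into a sum of block contributions:
$$\sum_{j,k=1}^n c_j\overline{c_k}\, g(x_j x_k^{-1})=\sum_{\ell=1}^r \sum_{j,k\in S_\ell} c_j\overline{c_k}\, h(h_j h_k^{-1}).$$
Each inner sum is nonnegative by positive definiteness of $h$ on $H$, applied to the tuple $(h_j)_{j\in S_\ell}\subset H$ with weights $(c_j)_{j\in S_\ell}$; summing over $\ell$ then yields $g\gg 0$ on $G$.

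I do not foresee any real obstacle: the argument is essentially bookkeeping once one recognises that the support condition $x_jx_k^{-1}\in H$ coincides with the right-coset equivalence on $\{x_1,\dots,x_n\}$. The only point requiring care is orientation — the form $x_jx_k^{-1}$ appearing in \eqref{eq:posdefdfnd} forces the use of \emph{right} cosets $Hx$ rather than left cosets $xH$ — but once this is fixed the computation is automatic and uses neither local compactness, nor topology, nor any measure-theoretic input, so the result holds for $H$ an arbitrary (not necessarily closed) subgroup.
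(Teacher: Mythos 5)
Your argument is correct: the right-coset decomposition turns the Gram matrix of $g$ into a block-diagonal matrix whose blocks are Gram matrices of $h$ on $H$, and this is exactly the standard proof. The paper itself does not write out a proof but merely cites Hewitt--Ross (32.43)(a), which proceeds by the same purely algebraic coset bookkeeping, so your approach coincides with the intended one.
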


The only problem with this is that it does not necessarily preserve analytic structure, like e.g. continuity, so in case we are involved with structural restrictions, such a statement in itself may not suffice. Actually, the crux of the proof of our results is to circumvent this technical obstacle and build a further function, still positive definite, and (essentially) preserving the values at $H$, but admitting some continuity (or at least integrability) restrictions, too. For a discussion of the (limited) possibility to use this ``trivial extension" in our constructions and proofs, in particular in case of cyclic or monothetic (sub-)groups, see \cite[\S 5]{KR}.

To fix notations, for locally compact (Hausdorff) topological groups $G$ we will consider their \emph{left-invariant Haar-measure} $\mu_G$ normalized the standard way so that for discrete groups all points have measure 1 and for non-discrete compact groups $\mu_G(G)=1$. Recall, that on $G$ there exist essentially unique left- and also right Haar measures, see e.g. \cite[Section 2.2]{Folland}.

For the fixed left Haar measure $\mu_G$ and for any fixed $g\in G$ the measure $H\to\mu_G(Hg)$ ($\forall H\subset G$ Borel measurable) provides another left Haar measure. The essential uniqueness of the left Haar measure thus gives that there is a function $\Delta:G\to (0,\infty)$ with $\mu_G(Hg)=\Delta(g)\mu_G(H)$ for all Borel measurable $H\subset G$ and any $g\in G$. Moreover, the so introduced \emph{modular function} $\Delta(g)$ is positive, continuous and multiplicative, i.e. a continuous group homomorphism from $G$ to $\RR_{+}$, see \cite[(15.11) Theorem]{HewittRossI}.

Correspondingly, we will consider the (left-) convolution of functions with respect to the left Haar-measure $\mu_G$, that is
\begin{equation}\label{eq:convolutiondef}
(f\star g) (x) := \int_G  f(y)g(y^{-1}x) d\mu_G(y)= \int_G  f(xz)g(z^{-1}) d\mu_G(z),
\end{equation}
defined for all functions $f,g \in L^1(\mu_G)$, see e.g. \cite[page 50]{Folland}. (The same convolution formula appears in \cite{HewittRossI} as a \emph{theorem}, namely (20.10) Theorem (i) on page 291, for Hewitt and Ross introduce convolution in a more general setting.)

As a concrete application, consider now the characteristic functions $\chi_A, \chi_B$ of two Borel measurable sets $A, B$ with finite left Haar measure. In this particular case definition \eqref{eq:convolutiondef} yields
\begin{equation}\label{eq:chistarchi}
\chi_A \star \chi_B (x) = \int_G \chi_A (y) \chi_B(y^{-1}x) d\mu_G(y) = \int_G \chi_A  \chi_{xB^{-1}} d\mu_G
= \mu_G (A \cap xB^{-1}).
\end{equation}
In particular, if $B=A^{-1}$, then $(\chi_A\star\chi_{A^{-1}})(x)=\mu_G(A\cap xA)$.

For more on convolution of measures and functions see e.g. \cite[pages 49-54]{Folland} and \cite[\S\S 19,20]{HewittRossI} (with respect to left Haar measure) or \cite[pages 180-]{Bachman} (however, the latter is w.r.t. right Haar measure). Note that convolution is in general \emph{not} commutative: more precisely, the convolution is commutative if and only if the locally compact group itself is commutative, see e.g. \cite[(20.24) Theorem]{HewittRossI}. We will need the next well-known assertion (which we will use only in the rather special case of characteristic functions, however).

\begin{lemma}\label{l:convolutionsquare} Let $f\in L^2(\mu_G)$ be arbitrary. Then the``convolution square" of $f$ exists, moreover, it is a continuous positive definite function, that is, $f\star \widetilde{f}\gg 0$ and
$f\star \widetilde{f} \in C(G)$.
\end{lemma}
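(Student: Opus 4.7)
The plan is to rewrite the convolution square as an $L^2$ inner product involving a left translate, and then read off all three assertions from this representation.

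First I would substitute the definition of $\widetilde{f}$ into the convolution formula \eqref{eq:convolutiondef}. Using $(y^{-1}x)^{-1} = x^{-1}y$ one gets
\[
(f\star\widetilde{f})(x) \;=\; \int_G f(y)\,\overline{f(x^{-1}y)}\,d\mu_G(y) \;=\; \langle f,\, L_x f\rangle_{L^2(\mu_G)},
\]
where $L_x f(y):=f(x^{-1}y)$ is left translation. Since $\mu_G$ is left-invariant, $\|L_x f\|_2 = \|f\|_2$, so by Cauchy--Schwarz the integral converges absolutely with $|(f\star\widetilde{f})(x)|\le \|f\|_2^2$ for every $x\in G$. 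This takes care of existence (and in fact gives boundedness).

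Next, for positive definiteness, pick any $n\in\NN$, $c_1,\dots,c_n\in\CC$ and $x_1,\dots,x_n\in G$. Starting from the formula just derived,
\[
(f\star\widetilde{f})(x_j x_k^{-1}) \;=\; \int_G f(y)\,\overline{f(x_k x_j^{-1}y)}\,d\mu_G(y),
\]
I apply the substitution $y=x_j u$, which is allowed by left invariance of $\mu_G$, and obtain
\[
(f\star\widetilde{f})(x_j x_k^{-1}) \;=\; \int_G f(x_j u)\,\overline{f(x_k u)}\,d\mu_G(u).
\]
Summing then gives
\[
\sum_{j,k=1}^n c_j\overline{c_k}\,(f\star\widetilde{f})(x_j x_k^{-1}) \;=\; \int_G \Bigl|\sum_{j=1}^n c_j\, f(x_j u)\Bigr|^2 d\mu_G(u) \;\ge\; 0,
\]
which is exactly \eqref{eq:posdefdfnd} for $f\star\widetilde{f}$. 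The exchange of sum and integral is justified by Cauchy--Schwarz applied to each term, since every $u\mapsto f(x_j u)$ is a left translate of $f$ and hence still in $L^2(\mu_G)$.

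Finally, for continuity, I would invoke the standard fact that the left regular representation on $L^2(\mu_G)$ is strongly continuous, i.e.\ $x\mapsto L_x f$ is continuous from $G$ into $L^2(\mu_G)$ for every $f\in L^2(\mu_G)$. Combined with continuity of the inner product, this makes $x\mapsto \langle f, L_x f\rangle = (f\star\widetilde{f})(x)$ continuous on $G$. The main subtlety, rather than a real obstacle, is keeping the left/right bookkeeping correct in a non-Abelian setting: the proof only uses left invariance of $\mu_G$ and never right invariance, which is why no modular function enters and why the result holds verbatim in the possibly non-commutative case considered here.
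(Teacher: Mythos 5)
Your proof is correct. Note that the paper does not actually prove Lemma \ref{l:convolutionsquare} at all: its ``proof'' consists of citations to Rudin, Bachman, Hewitt--Ross and Folland. What you have written is essentially the standard argument underlying those references, carried out in full: the identity $(f\star\widetilde{f})(x)=\langle f, L_x f\rangle_{L^2(\mu_G)}$ gives existence and boundedness by Cauchy--Schwarz, the substitution $y=x_ju$ turns the quadratic form of \eqref{eq:posdefdfnd} into $\int_G\bigl|\sum_j c_j f(x_ju)\bigr|^2\,d\mu_G$, and continuity follows from strong continuity of left translation on $L^2(\mu_G)$. All three steps are sound, and you are right that only left invariance is ever used, so the modular function plays no role. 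The one ingredient you invoke without proof is the strong continuity of $x\mapsto L_xf$ in $L^2(\mu_G)$; this is itself a nontrivial standard fact (proved via density of $C_c(G)$ in $L^2$ and uniform continuity of compactly supported continuous functions), so citing it is legitimate, but it is where the real analytic content of the continuity assertion is hiding. In short: a correct, self-contained substitute for the paper's reference-only proof.
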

\begin{proof}
This can be found for locally compact \emph{Abelian} groups in \cite[\S 1.4.2(a)]{rudin:groups}, and for the general case in e.g. \cite[Example 4, p. 220]{Bachman} or \cite[(32.43) (e)]{HewittRossII}. It also follows from a combination of \cite[(3.35) Proposition]{Folland} and
\cite[(3.16) Corollary]{Folland}.
\end{proof}

Although it is very useful when it holds, in general this statement cannot be reversed. Even for classical Abelian groups, it is a delicate question when a positive definite continuous function has a "convolution root" in the above sense. For a nice survey on the issue see e.g. \cite{EGR}. We will however be satisfied with a very special case, where this converse statement follows from a classical result of L. Fej\'er and F. Riesz, see \cite[Theorem 1.2.1]{Szego}, \cite{Fej}, or \cite[I, page 845]{Fgesamm}. For details about the below reformulation in the language of positive definite sequences see \cite[Lemma 2.2 (i)]{KR}.

\begin{lemma}\label{l:FR} Let $\psi:\ZZ\to \CC$ be a finitely supported positive definite sequence. Then there exists another sequence $\theta:\ZZ\to \CC$, also finitely supported, such that $\theta\star\widetilde{\theta}=\psi$. Moreover, if $\supp \psi \subset [-N,N]$, then we can take $\supp \theta \subset [0,N]$.
\end{lemma}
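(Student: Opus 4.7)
The plan is to reduce the statement to the classical Fejér--Riesz theorem on the unit circle by identifying the finitely supported sequence $\psi$ with the coefficients of a trigonometric polynomial on $\TT$.

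First I would associate to $\psi$ the trigonometric polynomial
\[
T(t) := \sum_{n=-N}^{N} \psi(n)\, e^{2\pi i n t}, \qquad t\in \TT,
\]
which is of degree at most $N$. The first key step is to show that $T(t)\geq 0$ for all $t$, using positive definiteness of $\psi$. For this I would apply the defining inequality \eqref{eq:posdefdfnd} on $G=\ZZ$ to the particular choice $x_j = j$ and $c_j = e^{2\pi i j t}$ for $j=0,\ldots,M-1$. This yields
\[
0 \leq \sum_{j,k=0}^{M-1} e^{2\pi i(j-k)t}\, \psi(j-k) = \sum_{n=-(M-1)}^{M-1} (M-|n|)\, e^{2\pi i n t}\, \psi(n),
\]
since the number of pairs $(j,k)$ with $j-k=n$ is $M-|n|$. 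Dividing by $M$ and letting $M\to\infty$, the coefficient $(1-|n|/M)$ tends to $1$ for every $n$ in the (finite) support of $\psi$, so the right-hand side converges to $T(t)$; hence $T(t)\geq 0$ on $\TT$.

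Second, I would invoke the classical Fejér--Riesz theorem, quoted from \cite{Szego} in the excerpt: any nonnegative trigonometric polynomial $T$ of degree $\leq N$ admits a representation
\[
T(t) = |P(t)|^2, \qquad P(t) = \sum_{k=0}^{N} \theta(k)\, e^{2\pi i k t},
\]
with some finite sequence $\theta:\ZZ\to \CC$ supported in $[0,N]$. Expanding the modulus square and collecting the coefficient of $e^{2\pi i n t}$ gives
\[
T(t) = \sum_{n=-N}^{N} \Bigl(\sum_{k} \theta(k+n)\,\overline{\theta(k)}\Bigr)\, e^{2\pi i n t}.
\]
On the other hand, from \eqref{eq:ftildedef} and the convolution formula for $\ZZ$, for every $n\in\ZZ$ one has
\[
(\theta \star \widetilde{\theta})(n) = \sum_{j\in\ZZ} \theta(j)\, \widetilde{\theta}(n-j) = \sum_{j\in\ZZ} \theta(j)\, \overline{\theta(j-n)} = \sum_{k\in\ZZ} \theta(k+n)\,\overline{\theta(k)}
\]
(with the substitution $k=j-n$). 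Comparing the Fourier coefficients of $T$ in the two expressions $T = \sum_n \psi(n) e^{2\pi i nt}$ and $T = |P|^2$ thus yields $\psi(n) = (\theta\star\widetilde{\theta})(n)$ for every $n\in\ZZ$, which is the desired identity $\psi = \theta\star\widetilde{\theta}$. The support assertion $\supp\theta\subset[0,N]$ is automatic from the choice of $P$.

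The main obstacle is really only the careful bookkeeping in the last paragraph: one must make sure that the classical Fejér--Riesz factorization (typically stated in terms of nonnegative trigonometric polynomials) matches the convolution convention \eqref{eq:convolutiondef} and the reversal \eqref{eq:ftildedef} used here, in particular that $\widetilde{\theta}(m)=\overline{\theta(-m)}$ leads exactly to the coefficient identity above rather than to some shifted or conjugated variant. Once the indexing is aligned, both the existence of $\theta$ and the support refinement follow directly from the Fejér--Riesz theorem.
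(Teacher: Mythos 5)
Your proposal is correct and follows exactly the route the paper intends: the paper does not prove this lemma itself but cites the classical Fejér--Riesz factorization (and \cite[Lemma 2.2 (i)]{KR}) for the reformulation, which is precisely the passage to the nonnegative trigonometric polynomial $T=\widecheck{\psi}$, the factorization $T=|P|^2$ with $P$ an analytic polynomial of degree $N$, and the coefficient matching you carry out. Your bookkeeping of the conventions \eqref{eq:ftildedef} and \eqref{eq:convolutiondef} agrees with the identity $\psi(n)=\sum_k\theta(k)\overline{\theta(k-n)}$ used later in the paper's proofs, so nothing is missing.
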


A slightly less strict analog of the existence of a convolution square-root also holds in $\ZZ_m$. Again, for the--simple and straightforward--proof see \cite[Lemma 2.2 (ii)]{KR}.

\begin{lemma}\label{l:FRfinite}  If $\psi:\ZZ_m\to\CC$, $\psi\gg 0$ on $\ZZ_m$,
then there exists $\theta:\ZZ_m\to \CC$ with $\theta\star\widetilde{\theta}=\psi$.
\end{lemma}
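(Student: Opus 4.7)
The plan is to exploit the fact that on the finite abelian group $\ZZ_m$, convolution is diagonalized by the discrete Fourier transform, so extracting a convolution square root reduces to extracting an ordinary pointwise square root of $\widehat\psi$. This is possible provided $\widehat\psi \geq 0$, which turns out to be a convenient reformulation of positive definiteness on $\ZZ_m$ (a finite version of Bochner's theorem).

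First I would set up notation and record the two standard identities. For $f:\ZZ_m\to\CC$ define $\widehat f(\ell):=\sum_{j\in\ZZ_m} f(j) e^{-2\pi i j\ell/m}$. A direct calculation (using that the Haar measure on a discrete group is the counting measure, per the convention fixed in Section \ref{sec:posdefoverview}) gives the convolution identity $\widehat{f\star g}(\ell)=\widehat f(\ell)\,\widehat g(\ell)$, and the substitution $j\mapsto -j$ yields $\widehat{\widetilde f}(\ell)=\overline{\widehat f(\ell)}$.

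Next I would verify the key equivalence: $\psi\gg 0$ on $\ZZ_m$ if and only if $\widehat\psi(\ell)\ge 0$ for every $\ell\in\ZZ_m$. For the ``only if'' direction, specializing \eqref{eq:posdefdfnd} to $n:=m$, $x_j:=j$ and $c_j:=e^{-2\pi i j\ell/m}$, the double sum collapses to $m\,\widehat\psi(\ell)$, which must be nonnegative. The ``if'' direction follows from the Fourier inversion formula $\psi=\frac{1}{m}\sum_\ell \widehat\psi(\ell)\chi_\ell$, where $\chi_\ell(j):=e^{2\pi i j\ell/m}$; since each character $\chi_\ell$ is positive definite (it arises from a $1$-dimensional unitary representation of $\ZZ_m$), Lemma \ref{l:furtherposdeffunctions}\textit{(iii)} gives $\psi\gg 0$. (Alternatively one may simply note that the matrix in \eqref{eq:posdefdfnd}, with the choice $x_j:=j$, is circulant, so it is unitarily diagonalized by the character matrix with eigenvalues $\widehat\psi(\ell)$, whence positive semidefiniteness is equivalent to $\widehat\psi\ge 0$.)

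Granted this, the construction of $\theta$ is immediate. Define $\widehat\theta(\ell):=\sqrt{\widehat\psi(\ell)}\ge 0$ for each $\ell\in\ZZ_m$, which is legitimate by the nonnegativity just established, and let $\theta:\ZZ_m\to\CC$ be recovered by Fourier inversion. Then
$$
\widehat{\theta\star\widetilde\theta}(\ell)=\widehat\theta(\ell)\,\overline{\widehat\theta(\ell)}=|\widehat\theta(\ell)|^2=\widehat\psi(\ell)\qquad(\ell\in\ZZ_m),
$$
and injectivity of the discrete Fourier transform yields $\theta\star\widetilde\theta=\psi$, as required. I do not anticipate any genuine obstacle: the only step of substance is the finite Bochner equivalence in the previous paragraph, and that one is essentially diagonalization of a circulant matrix.
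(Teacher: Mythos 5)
Your proof is correct, and it follows the standard route that the paper itself intends: the paper does not reprove this lemma but cites \cite[Lemma 2.2 (ii)]{KR}, whose argument is exactly this diagonalization of convolution by the discrete Fourier transform on $\ZZ_m$, the finite Bochner equivalence $\psi\gg 0 \Leftrightarrow \widehat\psi\ge 0$, and a pointwise square root of $\widehat\psi$. The only cosmetic point is that Lemma \ref{l:furtherposdeffunctions}\textit{(iii)} is stated for two functions and strictly positive coefficients, so the ``if'' direction needs the trivial remark that one may drop the terms with $\widehat\psi(\ell)=0$ and iterate; your circulant-matrix alternative avoids even that.
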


Note the slight loss of precision -- we cannot bound the support of $\theta$ in terms of a control of the support of $\psi$. This is natural, for the same finitely supported sequence can be positive definite on $\ZZ_m$ more easily than on $\ZZ$, in view of the fact that the equivalent restriction that its Fourier transform (the trigonometric polynomial $T$) satisfies $T(2\pi n/m)\geq 0$ ($\forall n\in \ZZ_m$) is met more easily than $T(t)\geq 0$ ($\forall t\in \TT$).

\section{Proofs of the Results under Condition \ref{c:CF}}\label{s:proof}

\begin{proof}[Proof of Theorem \ref{th:KrenciCF}]
Let us write throughout the proof $H:=[-N,N]$. According to Proposition \ref{p:realpd} (i), (ii) and the result of Carath\'eodory and Fej\'er i.e. Theorem \ref{th:CF}, we have $\CF^c(H)=\CF^{\#}(H)=\KK^c(H)=\KK^{\#}(H)=\cos\left(\frac{\pi}{N+2} \right)$, hence it suffices to prove $\CF^c_G(\Om,z)=\CF^{\#}_G(\Om,z)=\CF^c(H)$ and $\KK^c_G(\Om,z)=\KK^{\#}_G(\Om,z)=\KK^c(H)$.

The complex and real cases are exactly similar, so we work out only one, say the complex case. As obviously we have $\CF^c_G(\Om,z)\leq \CF^{\#}_G(\Om,z)$, we are to prove only two inequalities, the first being that $\CF^{\#}_G(\Om,z) \leq \CF^{c}(H)$.

Since now $H\subset \ZZ$ is finite, there can be no difference between $\CF^{c}(H)$ and $\CF^{\#}(H)$, so it suffices to show $\CF^{\#}_G(\Om,z) \leq \CF^{\#}(H)$. Let now take any $f\in \FF^{\#}_G(\Om)$. Consider the subgroup $Z:=\langle z \rangle \leq G$. Noting that according to Lemma \ref{l:furtherposdeffunctions} {\it{(i)}}, $g:=f|_Z \gg 0$ on $Z$, we have defined a function $g \in \FF^{\#}_Z((\Om\cap Z))$. Finally, let us remark that the natural isomorphism $\eta:\ZZ\to Z$, which maps according to $\eta(k):=z^k$, carries over $g$, defined on $Z\le G$, to a function $\psi:=g\circ \eta: \ZZ\to \CC$, which is therefore positive definite on $\ZZ$, has normalized value $\psi(0)=g(e)=f(e)=1$, and $\supp \psi \subset H$ for $\supp g \subset (\supp f \cap Z) \subset (\Om \cap Z) = \{ z^k:|k|\leq N\}=\eta(H)$ in view of \eqref{CFcondition}. That is, $\psi \in \FF^{\#}_\ZZ(H)$.

From here we read that $|f(z)|=|g(z)|=|\psi(1)| \leq \sup \left\{ |\f(1)| ~:~  \f \in \FF^{\#}_\ZZ(H) \right\} =\CF^{\#}(H)$. Taking $\sup_{f\in \FF^{\#}_G(\Om)}$ on the left hand side concludes the proof of the first part.

\smallskip

It remains to show $\CF^c(H) \leq \CF^{c}_G(\Om,z)$.

So let us consider an arbitrary function $\psi\in \FF^c(H)$. Since $\psi \gg 0$, its Fourier transform, the trigonometric polynomial $T(t):=\widecheck{\psi}(t)=\sum_{n=-N}^N \psi(n) e^{2\pi i n t}$ is nonnegative, and we can invoke the classical theorem of Fej\'er and Riesz to represent it as a square $T(t)=|P(t)|^2$. Using the above described preliminaries precisely, we can apply Lemma \ref{l:FR}. Thus we find another sequence $\theta:[0,N]\to \CC$ such that $\psi(n)=\sum_{k=0}^N \theta(k)\overline{\theta(k-n)}=(\theta\star\widetilde{\theta}) (n)$.

Let us consider some compact neighborhood $U$ of $e$. Observe that for any given value $-N\leq n \leq N$ we have at most $N+1$ different pairs $0\leq k,j \leq N$ with $j-k=n$ (while for $|n|>N$ we have none). If $j-k=n$, then $z^{j}Uz^{-k}$ is the continuous image of the compact set $U$ under the continuous homeomorphism $x\to z^{j}xz^{-k}$. These continuous homeomorphisms take $e$ to $z^n$, hence $z^{j}Uz^{-k}$ is a compact neighborhood of $z^n$, and so is the set $U_n:= \bigcup \{z^{j}Uz^{-k}~:~ 0\leq j,k\leq N, k-j=n\}$. Note that by Condition \ref{c:CF}, all these $z^n\in \Om$.

So obviously, if $U\Subset \Om$ is chosen small enough, we can ensure that also $U_n\Subset \Om$, hence also $\bigcup_{n=-N}^N U_n \Subset \Om$.
Also, the finitely many $z^n$ with $n\in H=[-N,N]$ are all different (for $o(z)=\infty$), whence for $U$ chosen small enough, we can ensure $z^n\not\in U$ for all $0<|n|\leq N$.

Next, let $f$ be any positive definite function, supported compactly on the above chosen $U \Subset \Om$. Then by Lemma \ref{l:furtherposdeffunctions} \emph{(iv)}, also the function $F(x):=\sum_{j=0}^N \sum_{k=0}^N \theta(k)\overline{\theta(j)} f(z^{-j}xz^{k})$ is positive definite, moreover, we obviously have $\supp F \Subset \bigcup_{j,k=0}^N z^{j}Uz^{-k}=\bigcup_{n=-N}^N U_n$. Therefore, by the above choice of $U$, we have $\supp F \Subset \Om$, too.

Now let us compute $F(e)$ and $F(z)$. By the above choice of $U$ and $F$ we have that
$$
F(e)=\sum_{j,k=0}^N \theta(k)\overline{\theta(j)} f(z^{-j+k})= \sum_{j=0}^N \theta(j)\overline{\theta(j)} f(e) + \sum_{j=0}^N \sum_{0\leq k\leq N \atop k\ne j} \theta(k)\overline{\theta(j)} f(z^{-j+k})=f(e),
$$
because the second sum vanishes in view of $z^n\not \in \supp f \subset U$ ($n\ne 0$) by the choice of $U$, while $\sum_{j=0}^N |\theta(j)|^2=\psi(0)=1$. Similarly, we compute
$$
F(z)
=\sum_{j,k=0}^N \theta(k)\overline{\theta(j)} f(z^{-j+1+k})
= \sum_{j=1}^{N} \theta(j-1)\overline{\theta(j)} f(e) + \sum_{j=0}^{N} \sum_{0\leq k\leq N \atop k\ne j-1} \theta(k)\overline{\theta(j)} f(z^{-j+1+k})=\overline{\psi(1)} f(e),
$$
because again the second sum vanishes and $\sum_{j=1}^{N} \theta(j-1)\overline{\theta(j)}=(\theta\star\widetilde{\theta})(-1)=\psi(-1)=\overline{\psi(1)}$.

So \emph{if there exists a function} $f\gg 0$, continuous and with $\supp f \Subset U$, then we also have another continuous $F\gg 0$ with $\supp F \Subset \bigcup_{n=-N}^N U_n \Subset \Om$ having the properties that $F(e)=\psi(0)f(e)$ and $F(z)=\overline{\psi(1)}f(e)$. Of course, $f\equiv 0$ would qualify, but we want also $f(e)\ne 0$, which, in view of \eqref{eq:posdefbdd}, is equivalent to $f\not \equiv 0$. So with any such not identically vanishing function $f$ we can construct the above $F$, and even normalize it to $F_0:=\frac{1}{f(e)} F$, to obtain $F_0\in \FF^c(\Om)$ with $F_0(z)=\overline{\psi(1)}$. Then this entails that
$$
|\psi(1)|=|F_0(z)| \leq \sup \{|\phi(z)|~:~ \phi \in \FF^c_G(\Om)\}=\CF^c_G(\Om,z).
$$
Taking supremum on the left hand side over all $\psi\in \FF^c_\ZZ(H)$ thus leads to $\CF^c(H)\leq \CF^c_G(\Om,z)$, provided we can find a continuous, not identically zero $f\gg 0$, supported in $U$.

It only remains to construct  a continuous $f\gg 0$ with $\supp f \Subset U$ for the given compact neighborhood $U$ of $e$.

As $U$ is a neighborhood of $e$ and group multiplication is continuous, there exists an open neighborhood $V$ of $e$ such that $VV^{-1} \subset U$. Let now $W\Subset V$ be chosen such that $\mu_G(W)\ne 0$ (to which it suffices e.g. that $W$ is a closure of an open, nonvoid set). Then the characteristic function $g:=\chi_W$ of $W$ is certainly in $L^{\infty}(G)\subset L^2(G)$, and we can apply the above Lemma \ref{l:convolutionsquare} to get that $f:=g\star\widetilde{g} \gg 0$ and $f\in C(G)$. Note that $\widetilde{g}=\chi_{W^{-1}}$ and $f(x)=(\chi_{W}\star \chi_{W^{-1}})(x)=\mu_G(W \cap xW)$, see \eqref{eq:chistarchi}. So in particular $f(e)=\mu_G(W)\ne 0$, and also $\supp f \subset \overline{W W^{-1}} =W W^{-1} \subset V V^{-1} \subset U$, as needed.

Plugging this $f$ in the above construction thus completes the proof of $\CF^c(H)\leq \CF^c_G(\Om,z)$, whence the theorem.
\end{proof}

\begin{proof}[Proof of Theorem \ref{th:KrenciCFfini}] The direct proof is rather similar to the preceding one, once we carefully change all references from $\ZZ$ to $\ZZ_m$, $\CF(H)$ to $\CF_m(H)$ and $\FF_\ZZ(\Om,z)$ to $\FF_{\ZZ_m}(\Om,z)$, and note that $Z:=\langle z \rangle$ is now only a finite subgroup with $Z\cong \ZZ_m$, so the natural isomorphism $\eta(k):=z^k$ acts between $\ZZ_m$ and $Z$ now.

Note that here we have $H(\Om,z)=\{k\in\ZZ_m~:~z^k\in \Om\}$ in view of $z^m=e$ and Condition \ref{c:CF}, whence $\CF_m(H)=\CF_m([-N,N])$ and $\KK_m(H)=\KK_m([-N,N])$.

However, instead of describing such a slightly varied proof, we can as well postpone the proof until the later Theorem \ref{th:KrenciStrongfini} is proven. As that result is more general, from there only the specialization of $H(\Om,z)=[-N,N]$ is needed to infer Theorem \ref{th:KrenciCFfini}. In this regard it is of importance that the condition of roundness, defined and used later in the corresponding theorem for $o(z)=\infty$, is not needed in the fully general, unconditional Theorem \ref{th:KrenciStrongfini} for the finite order case. The reason for this formal difference between the corresponding results will also be explained in the next section.
\end{proof}

\section{A new condition on topological groups and on their elements}\label{sec:roundcondi}

Now instead of Condition \ref{c:CF} we introduce a different
one. Recall that $e\in G$ stands for the unit element of the
group $G$, and the left invariant Haar measure is denoted by
$\mu_G$. Also note that the \emph{symmetric difference} of two
subsets of $G$ are defined by $A\triangle B:= (A\setminus B)
\cup (B\setminus A)$ ($A,B\subset G$).

\begin{definition}\label{def:round}
We say that $z\in G$ is a \emph{round} element of the locally
compact group $G$, if for all open neighborhood $U$ of the unit
$e$ there exists another open neighborhood $V\subset U$ of $e$
such that $\mu_G(zVz^{-1}\triangle V) =0$.

Furthermore, the \emph{group} $G$ itself is called \emph{round}, if all
elements $z\in G$ are round according to the above.
\end{definition}

\begin{remark}\label{r:round} The seemingly one-sided definition does not depend on using left- or right-Haar measure. Indeed, if e.g. $z\in G$ is \emph{left-round}, and $U$ is any open neighborhood of $G$, then consider $U^{-1}\subset G$, which is another open neighborhood of $e\in G$, and take another neighborhood $V\subset U^{-1}$ of $e$ with the postulated property that $zVz^{-1}$ agrees $V$ apart from a $\mu_G$-null set. Taking inverses, it means that for the right Haar-measure $\mu_G^*(A):=\mu_G(A^{-1})$ and $W:=V^{-1}\subset (U^{-1})^{-1}=U$ we have $0=\mu_G(zVz^{-1}\triangle V)=\mu_G^*((zVz^{-1}\triangle V)^{-1})=\mu_G^*(zWz^{-1}\triangle W)$, proving that $z$ is right-round, too.
\end{remark}

\begin{remark}\label{r:amenable} Note that the condition is similar, but not equivalent to, the F{\o}lner Condition, which is an equivalent condition for amenability of the topological group $G$.

The F{\o}lner condition postulates that for every finite or compact set $F\subset G$, and for any $\ve>0$, there exists a measurable set $U$ of positive finite Haar measure, such that $\mu_G(U\triangle gU)<\ve \mu_G(U)$ holds for all $g\in F$. On the one hand, this seemingly requires more, as the set $U$ must be \emph{uniformly} given for all $g\in F$, on the other hand it does not require vanishing, but only relative smallness, of the measure of the symmetric difference. This seemingly little deviation causes that in fact $\mu_G(U\triangle gU)$ is typically large, however, $\mu_G(U)$ is even much larger. An easy example is any compact interval in $\RR$, with $U$ a much larger interval. Such large sets $U$ also exist in all Abelian groups in view of a well-known result of LCA groups, see e.g. 2.6.7 Theorem on page 52 of \cite{rudin:groups}. (This also means, as is well-known, that all LCA groups are amenable.)

However, the major difference between the two notions is the one-sided multiplication in amenability, and conjugation in roundedness. Inner automorphisms (i.e. conjugations) may distort sets, they even may change the Haar measure (depending on the value of the modular function), however, the definition of roundedness means that some (arbitrarily small) neighborhoods stay invariant. While amenability is related at the first place to the possibility of introducing an invariant mean on the group, roundedness is kind of a topological substitution to commutativity. If elements themselves do not always commute, roundedness formulates the property that elements and some arbitrarily small neighborhoods (a topological basis system) can still be interchanged. Therefore, one may perhaps call this property "topological commutativity".
\end{remark}

\begin{proposition}\label{p:invariance} A point $z\in G$ is round if and only if for any neighborhood $U$ of $e$ there exists a nonzero, continuous, positive definite function $f\gg 0$ such that $\supp f \Subset U$ and $f$ is invariant under conjugation by $z$: $f(z^{-1}xz)=f(x)$ for all $x\in G$.
\end{proposition}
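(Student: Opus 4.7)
For the forward direction, assume $z$ is round and let $U$ be a given neighborhood of $e$. Using local compactness I would first select a compact neighborhood $K\subset U$ of $e$, and then pick an open $U_0\ni e$ with $\overline{U_0 U_0^{-1}}\subset K$ by joint continuity of multiplication. Applying roundness to $U_0$ yields an open $V\subset U_0$ containing $e$ with $\mu_G(zVz^{-1}\triangle V)=0$; replacing $V$ by $V\cap V^{-1}$ preserves this property (since inversion preserves $\mu_G$-null sets) and makes $V$ symmetric. A crucial preliminary observation is that \emph{roundness forces $\Delta(z)=1$}: the identity $\mu_G(zAz^{-1})=\mu_G(A)/\Delta(z)$ (combining left-invariance with $\mu_G(Hg)=\Delta(g)\mu_G(H)$) applied to $A=V$ gives $\mu_G(V)=\mu_G(V)/\Delta(z)$, and $V$ being a nonempty open set with compact closure $\overline{V}\subset K$ ensures $0<\mu_G(V)<\infty$. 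I would then set $f:=\chi_V\star\widetilde{\chi_V}=\chi_V\star\chi_V$; by Lemma~\ref{l:convolutionsquare} $f$ is continuous and positive definite, by \eqref{eq:chistarchi} $f(e)=\mu_G(V)>0$, and $\supp f\subset\overline{V V^{-1}}\subset\overline{U_0 U_0^{-1}}\subset K\Subset U$. To verify $f(z^{-1}xz)=f(x)$, I would write $f(z^{-1}xz)=\int \chi_V(y)\chi_V(y^{-1}z^{-1}xz)\,d\mu_G(y)$ and substitute $y=z^{-1}uz$, whose Jacobian is $\Delta(z)=1$; this transforms the integral into $\int \chi_{zVz^{-1}}(u)\chi_{zVz^{-1}}(u^{-1}x)\,d\mu_G(u)$, and since $V\triangle zVz^{-1}$ is $\mu_G$-null while $\mu_G$-null sets are preserved under inversion and left translation, for each fixed $x$ the integrand agrees a.e.\ in $u$ with $\chi_V(u)\chi_V(u^{-1}x)$, so the integral equals $f(x)$.

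Conversely, for the backward direction, assume the postulated $f$ exists for every neighborhood of $e$; fix any neighborhood $U$ of $e$ and take the corresponding $f$ with $\supp f\Subset U$ and $f(z^{-1}xz)=f(x)$. Since $f\not\equiv 0$, \eqref{eq:posdefbdd} gives $f(e)=\|f\|_\infty>0$. Define $V:=\{x\in G:\Re f(x)>f(e)/2\}$, which is open by continuity of $f$, contains $e$, and satisfies $V\subset\{x\in G:f(x)\ne 0\}\subset\supp f\subset U$. The invariance $f(z^{-1}xz)=f(x)$ forces $\Re f(z^{-1}xz)=\Re f(x)$, hence $z^{-1}xz\in V\iff x\in V$, so $zVz^{-1}=V$ exactly, and in particular $\mu_G(zVz^{-1}\triangle V)=0$ as required for roundness of $z$.

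The main obstacle is the change-of-variables computation in the forward direction: a naive calculation produces only $f(z^{-1}xz)=\Delta(z)f(x)$, and one must additionally bridge the gap between the a.e.\ identification $\chi_V\sim\chi_{zVz^{-1}}$ supplied by roundness and the pointwise equality demanded for the continuous function $f$. Both difficulties vanish once one recognizes that roundness automatically forces $\Delta(z)=1$, and that the a.e.\ coincidence of the integrands (for each fixed $x$, by standard preservation of null sets under inversion and left translation) is enough for the integrals themselves to agree pointwise. By comparison, the backward direction is essentially a one-line construction from the super-level set of $\Re f$.
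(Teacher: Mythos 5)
Your proof is correct and follows the same route as the paper's: in the forward direction the convolution square $f=\chi_V\star\widetilde{\chi_V}$ of an almost conjugation-invariant neighborhood $V$ (shrunk so that $\overline{VV^{-1}}$ sits compactly inside $U$), and in the converse direction an open level set of $f$, which is carried exactly onto itself by conjugation with $z$. In fact your write-up is more complete than the paper's, whose proof constructs $f$ but stops short of verifying the invariance $f(z^{-1}xz)=f(x)$; your change-of-variables computation, together with the preliminary observation that roundness forces $\Delta(z)=1$ (recorded in the paper only afterwards, as Lemma \ref{l:roundmodular}) and the fact that left translation and inversion preserve $\mu_G$-null sets, supplies precisely the missing verification.
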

\begin{proof}
Let us suppose, that there exists such a function $f$. Recall that $f(0)=\|f\|_\infty =\max\{|f(x)|: x\in G\}$, which must be positive by condition. Let $V:=\{x: |f(x)|>0\}$. Than $V$ is an open set, and $e\in V$, as $|f|$ attains the maximum at $e$. Furthermore for all $x \in V$, $f(zxz^{-1})=f(x)\neq 0$, whence also $zxz^{-1} \in V$. It follows, that
$(zVz^{-1}\triangle V)=\emptyset$, $\mu_G(zVz^{-1}\triangle V)
=0$, so $z$ is a round element.

On the other hand, take a compact neighborhood $W$ of $e$ with
$WW^{-1} \subset U$, which is possible for $(x,y)\to xy^{-1}$
is continuous from $G\times G$ to $G$. Moreover, by continuity
$U_0:=WW^{-1}$ is also compact, hence $U_0 \Subset U$. Now we
use here the roundedness assumption on $z$: we select an open
neighborhood $V\subset W$ of $e$ with $z^{-1}Vz$ matching $V$
apart from a $\mu_G$-null set of their symmetric difference.
Then we define $f:=\chi_V\star \widetilde{\chi_V}=\chi_V\star
\chi_{V^{-1}}$, having value $f(x)=\mu_G(V\cap xV)$ according
to the calculations at \eqref{eq:chistarchi}. Note that as
before, by Lemma \ref{l:convolutionsquare} $f$ is a continuous,
compactly supported positive definite function, with support
$\supp f \Subset \overline{VV^{-1}}\Subset U_0\Subset U$.
\end{proof}

\begin{example}\label{ex:Abel} An $LCA$ group $G$ is round.

Indeed, commutativity implies $zVz^{-1}=V$ for any points $z\in
G$ and sets $V\subset G$.
\end{example}

\begin{example}\label{ex:Centrum}
In any locally compact group $G$, the elements of the
\emph{centrum} $C(G):=\{a\in G~: ag=ga  ~ \forall g\in G\}$ are
round elements.

Indeed, for any $U$ taking $V:=U$ is an open neighborhood of $e$ within $U$ and obviously $z\in C(G)$ has the property $zUz^{-1}=\{zuz^{-1}~:~ u\in U\}=U$.

Note that in Abelian groups $C(G)=G$, whence this is more
general than Example \ref{ex:Abel}.
\end{example}

\begin{example}\label{ex:OpenC}
If the centrum $C:=C(G)$ is open in $G$, then 
$G$ is round.

Indeed, for an open neighborhood $U$ of $e$ take $V:=U\cap
C\subset C$: then for any $z\in G$ we have $zVz^{-1}= \{
zvz^{-1}~:~v\in V\}=V$, for $z$ and $v$ commute for all $v\in
V\subset C$.

Again, in an LCA group this holds, as the centrum is the full
group $G$, whence open.
\end{example}

\begin{example}\label{ex:ozm} If $z\in G$ is a cyclic element (has torsion), then it is also round.

Indeed, let $o(z)=m<\infty$ and $e\in U\subset G$ open. Then also the conjugates $z^jUz^{-j}$ are open, and even the intersection $V:=\cap_{j=0}^{m-1} z^jUz^{-j}$ will be an open neighborhood of $e$. Moreover, $zVz^{-1} =\cap_{j=1}^{m} z^jUz^{-j}=V$, for $z^mUz^{-m}=eUe^{-1}=U$.
\end{example}

\begin{example}\label{ex:discrete} Let $G$ be any group, equipped with the discrete topology. Then $G$ is round.

Indeed, $V:=\{e\}$ works for all choices of $U$ and for all $z$.
\end{example}

\begin{example}\label{ex:normal}
Suppose $G$ has a topological basis $\{ V_\alpha ~:~
\alpha\in A\}$ at $e$ to generate the topology of $G$ and
consisting of normal subgroups. Then $G$ is round.

Indeed, then for any $U$ there is $V:=V_\alpha\subset U$, with $V\lhd G$ (i.e. a normal
subgroup of $G$), to which all conjugates $z^{-1}Vz=V$.

Note that this covers e.g. the case of an arbitrarily large
cardinality box- or topological product of finite, discrete, or
otherwise round locally compact groups.
\end{example}

Of course, it is also interesting to see examples of not round groups and elements. To find such $z\in G$ becomes easy once we note the next simple observation.

\begin{lemma}\label{l:roundmodular} Let $z\in G$ be a round element. Then we have for the modular function $\Delta(z)=1$.
\end{lemma}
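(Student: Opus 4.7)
The plan is to exploit the compatibility between conjugation and the modular function. Recall that for the left Haar measure and any Borel set $A\subset G$ with $g\in G$, we have $\mu_G(Ag)=\Delta(g)\mu_G(A)$, while left translation preserves $\mu_G$. Therefore the measure of a conjugate can be computed as
\[
\mu_G(zAz^{-1}) \;=\; \mu_G\bigl((zA)z^{-1}\bigr) \;=\; \Delta(z^{-1})\,\mu_G(zA) \;=\; \Delta(z)^{-1}\mu_G(A),
\]
using in turn the modular formula and the left-invariance of $\mu_G$.

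Next, I would fix a specific test neighborhood. Since $G$ is locally compact, the unit $e$ has some open neighborhood $U$ with compact closure, so $0<\mu_G(U)\le\mu_G(\overline{U})<\infty$ (compact sets have finite Haar measure, and nonempty open sets have positive Haar measure). Applying the roundness of $z$ to this $U$, we obtain an open neighborhood $V\subset U$ of $e$ with $\mu_G(zVz^{-1}\triangle V)=0$. In particular, $0<\mu_G(V)\le\mu_G(U)<\infty$, and since symmetric difference has measure zero, $\mu_G(zVz^{-1})=\mu_G(V)$.

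Combining the two identities yields
\[
\mu_G(V) \;=\; \mu_G(zVz^{-1}) \;=\; \Delta(z)^{-1}\mu_G(V),
\]
so dividing by the positive finite number $\mu_G(V)$ gives $\Delta(z)^{-1}=1$, hence $\Delta(z)=1$. The only points requiring care are the two standard Haar-measure facts used above — the conjugation formula (a direct consequence of $\mu_G(Ag)=\Delta(g)\mu_G(A)$) and the existence of a neighborhood of $e$ of finite positive measure (guaranteed by local compactness). There is no genuine obstacle beyond bookkeeping.
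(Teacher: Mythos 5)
Your proof is correct and follows essentially the same route as the paper: both arguments take a neighborhood of $e$ of finite positive Haar measure, apply roundness to get an essentially conjugation-invariant $V$, and then compare $\mu_G(V)$ with the measure of its conjugate via left-invariance and the formula $\mu_G(Ag)=\Delta(g)\mu_G(A)$, forcing $\Delta(z)=1$. The only cosmetic difference is that you conjugate as $zVz^{-1}$ (matching Definition \ref{def:round}) while the paper writes $z^{-1}Vz$; either orientation yields the conclusion.
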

\begin{proof} First, let us take any compact neighborhood $U$ of $e$, and to it some open $e \in V\subset U$, essentially invariant under the conjugation by $z$, as guaranteed by the roundedness of $z$. The left Haar-measure of the set $V$ is finite (as $U$ is compact), but positive (as $V$ is open). According to roundedness, $\mu_G(z^{-1}Vz\triangle V)=0$, thus we must have $\mu_G(V)=\mu_G(z^{-1}Vz)$, too. Using the left invariance of $\mu_G$ and the definition of the modular function we find $\mu_G(V)=\mu_G(z^{-1}Vz)=\mu_G(Vz)=\mu_G(V)\Delta(z)$, so using $\mu_G(V)\ne 0$ the assertion follows.
\end{proof}

As a result, any locally compact topological group $G$, which is not unimodular, is neither round. Moreover, any $z\in G$ with $\Delta (z)\ne 1$ is not a round element.

\begin{corollary}\label{c:unimodular} If a group $G$ is round, then it must be unimodular.
\end{corollary}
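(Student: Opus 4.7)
The plan is essentially to read off Corollary \ref{c:unimodular} directly from the immediately preceding Lemma \ref{l:roundmodular}. Recall that, according to Definition \ref{def:round}, calling the \emph{group} $G$ round means by fiat that \emph{every} element $z \in G$ is a round element. Lemma \ref{l:roundmodular}, already established, tells us that each individual round element $z$ satisfies $\Delta(z) = 1$. Combining these two facts elementwise yields $\Delta(z) = 1$ for every $z \in G$; since $\Delta \equiv 1$ is the definition of unimodularity, the conclusion follows.

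The only remaining work is bookkeeping: apply the universal quantifier in the definition of group-roundness to feed each $z \in G$ into Lemma \ref{l:roundmodular}. There is no real obstacle here, since the substantive content -- that the (one-sided) conjugation near-invariance of some arbitrarily small neighborhood of $e$ forces $\Delta(z) = 1$ -- was already carried out in the lemma using the left invariance of $\mu_G$, the defining relation $\mu_G(Vz) = \Delta(z)\mu_G(V)$, and the existence of a suitable open $V$ of positive finite Haar measure inside any compact neighborhood of $e$. Consequently, I would phrase the corollary with a one-sentence proof, quoting Definition \ref{def:round} and Lemma \ref{l:roundmodular} in succession.
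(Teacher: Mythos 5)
Your proposal is correct and coincides with the paper's own (implicit) argument: the corollary is stated immediately after Lemma \ref{l:roundmodular} precisely as the elementwise consequence you describe, namely that group roundness means every $z\in G$ is round, whence $\Delta(z)=1$ for all $z$ and $G$ is unimodular. Nothing further is needed.
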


In other words, finding a not round group or element is easy-- just take elements with the modular function not assuming 1 at it.

Recall\footnote{All the following nice comments in this section, which connect our notion of roundness to other well-known notions in locally compact groups, we thank to Prof. Pierre-Emmanuel Caprace, who kindly provided us these in an e-mail on November 15, 2013.} that in a locally compact group $G$ an element $g \in G$ is called:
\begin{itemize}
\item \textbf{equicontinuous} if there is a basis of identity neighborhoods in $G$ that are all invariant under conjugation by $g$;
\item \textbf{distal} if for every $x \in G \setminus \{e\}$, the closure $\overline{\{g^n x g^{-n} \; | \; n \in \ZZ\}}$ does not contain $e$.
\end{itemize}

Note that (according to Ellis, who refers to L. Zippin, see the closing paragraph of \cite{Ellis}) distality was considered by Hilbert in an attempt to give a topological characterization of the concept of a rigid group of motions. The closely related notions of distal elements, contraction groups and equicontinuous elements provided tools for exploring the structure of locally compact topological groups, see e.g. \cite{BaumWill, CM, CRW, Jaw, RajaShah} and the references therein.

\begin{proposition}
An element $g \in G$ is round if and only if it is equicontinuous.
\end{proposition}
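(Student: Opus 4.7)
\medskip

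\noindent\textbf{Proof plan.} The easy direction is equicontinuity $\Rightarrow$ roundness: if a basis of conjugation-invariant identity neighborhoods is available, then for any $U$ pick a basis element $V\subset U$ with $gVg^{-1}=V$; the symmetric difference is literally empty, so certainly of Haar measure zero. So the real content is the direction roundness $\Rightarrow$ equicontinuity, and the plan is to use the positive definite function machinery already established, in particular Proposition \ref{p:invariance}, to upgrade ``measure-theoretic invariance of a neighborhood'' to ``set-theoretic invariance of a (possibly smaller) neighborhood''.

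\smallskip

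So assume $g\in G$ is round, and fix any open neighborhood $U$ of $e$. The plan is to produce a smaller open $V\subset U$ with $gVg^{-1}=V$ (equality, not just almost-everywhere equality), and then let $U$ range over a basis to finish. By Proposition \ref{p:invariance} applied to $z:=g$ and $U$, there exists a nonzero continuous positive definite function $f\gg 0$ with $\supp f\Subset U$ and $f(g^{-1}xg)=f(x)$ for every $x\in G$. Note that substituting $x\mapsto gxg^{-1}$ in the invariance relation yields also $f(gxg^{-1})=f(x)$, so $f$ is in fact invariant under conjugation by both $g$ and $g^{-1}$.

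\smallskip

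Now define
\[
V := \{x\in G : f(x)\neq 0\}.
\]
I claim this is the desired neighborhood. It is open by continuity of $f$; it contains $e$, because for $f\gg 0$ one has $\|f\|_\infty=f(e)$ by \eqref{eq:posdefbdd}, and $f\not\equiv 0$ forces $f(e)>0$; and $V\subset \supp f\Subset U$. Finally, the two-sided invariance of $f$ gives the set-theoretic identity $gVg^{-1}=V$: indeed, $x\in V$ iff $f(x)\neq 0$ iff $f(g^{-1}xg)\neq 0$ iff $g^{-1}xg\in V$. Letting $U$ range over a neighborhood basis of $e$ and taking the corresponding $V=V_U$ yields a basis of identity neighborhoods all invariant under conjugation by $g$, i.e.\ $g$ is equicontinuous.

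\smallskip

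The only step that is not essentially immediate is the construction of the conjugation-invariant continuous positive definite function $f$ with $\supp f\Subset U$ --- and that step is already discharged by Proposition \ref{p:invariance}, whose nontrivial direction in turn rests on forming $\chi_V\star\widetilde{\chi_V}$ for a $z$-almost-invariant open $V$ and applying Lemma \ref{l:convolutionsquare}. So the main obstacle has effectively been absorbed into the earlier proposition, and the present statement just reads off the equivalence of ``roundness'' with the existing notion of equicontinuity by inspecting the support of the invariant bump function.
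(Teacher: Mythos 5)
Your proof is correct, but the converse direction takes a genuinely different route from the paper's. The paper argues measure-theoretically: given an identity neighborhood $W$, it picks $U\subset W$ with $U\cdot U^{-1}\subset W$, takes an almost-invariant $V\subset U$ from roundness, passes to the exactly invariant set $V'=\bigcap_{n\in\ZZ}g^nVg^{-n}$ (which still has positive measure since each conjugate differs from $V$ by a null set), and invokes the Steinhaus-type theorem \cite[Corollary 20.17]{HewittRossII} to conclude that $V'\cdot (V')^{-1}$ is a $g$-invariant identity neighborhood contained in $W$. You instead route through Proposition \ref{p:invariance}: take the conjugation-invariant continuous $f\gg 0$ with $\supp f\Subset U$ and set $V=\{x: f(x)\neq 0\}$, which is open, contains $e$ by \eqref{eq:posdefbdd}, and is exactly invariant. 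This is precisely the argument used in the first half of the paper's own proof of Proposition \ref{p:invariance} (where the set $\{|f|>0\}$ is shown to be exactly conjugation-invariant), so in effect you are observing that Proposition \ref{p:invariance} already characterizes equicontinuity and not merely roundness. What each approach buys: the paper's is self-contained at the level of measurable sets modulo the standard fact that $A\cdot A^{-1}$ is an identity neighborhood when $\mu(A)>0$; yours avoids that citation but leans on the positive definite machinery (convolution squares, and implicitly $\Delta(g)=1$ via Lemma \ref{l:roundmodular}, which is what makes the bump function in Proposition \ref{p:invariance} genuinely conjugation-invariant), so the logical chain is longer but every link is already established in the paper. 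One cosmetic point: in the easy direction you should note that if the invariant basis neighborhoods are not open you may pass to their interiors, which remain invariant since conjugation is a homeomorphism; the paper likewise dismisses this direction as clear.
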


\begin{proof}
The `if' part is clear. Assume conversely that $g$ is round and let $W$ be an identity neighborhood. Then there is an identity neighborhood $U \subset W$ such that $U \cdot U\inv \subset W$. By the roundness of $g$, we can find an identity neighborhood $V \subset U$ such that $\mu(V \triangle gVg\inv) = 0$. It follows that the set $V' = \bigcap_{n \in \ZZ} g^n V g^{-n}$ has the same measure as $V$; in particular $\mu(V')>0$. By \cite[Corollary 20.17]{HewittRossII}, the set $V' \cdot (V')\inv$ is an identity neighborhood. By construction it is $g$-invariant, and contained in $W$. Thus $g$ is equicontinuous.
\end{proof}

Recall that for any group element $g \in G$ the \emph{contraction group of $G$} is defined as $\con(g):=\{ x\in G~:~ g^nxg^{-n}\to e ~(n\to \infty) \}$. Note that this set is indeed trivially subgroup of $G$, by continuity of the group multiplication and taking inverses.

Distal elements can be characterized in terms of their contraction groups, as follows.

\begin{proposition}
An element $g \in G$ is distal if and only if its contraction group $\con(g)$ is trivial.
\end{proposition}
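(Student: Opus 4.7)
The forward direction is immediate. Suppose $g$ is distal and let $x \in \con(g)$. By the very definition of the contraction group, $g^n x g^{-n} \to e$ as $n \to \infty$, so $e$ lies in the closure of $\{g^n x g^{-n} : n \in \ZZ\}$; distality then forces $x = e$, and hence $\con(g) = \{e\}$.

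For the converse I would argue contrapositively: assuming $g$ is not distal, exhibit a nontrivial element of $\con(g)$. Non-distality gives some $x \in G \setminus \{e\}$ with $e$ in the closure of its $g$-conjugation orbit, hence a net $(n_\alpha) \subset \ZZ$ such that $g^{n_\alpha} x g^{-n_\alpha} \to e$. Since each conjugation $y \mapsto g^n y g^{-n}$ is a homeomorphism of $G$ fixing $e$, one has $g^n x g^{-n} \neq e$ for every $n \in \ZZ$, so $(n_\alpha)$ cannot be eventually constant; and because $\ZZ$ carries the discrete topology, a bounded net in $\ZZ$ must be eventually constant along a subnet. Passing to a subnet we may therefore assume $|n_\alpha| \to \infty$, and by a further refinement that either $n_\alpha \to +\infty$ or $n_\alpha \to -\infty$. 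By the symmetric role of $g$ and $g^{-1}$ it suffices to treat the case $n_\alpha \to +\infty$.

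The main obstacle is now to promote this cluster-point-at-$e$ behavior to a genuine sequential contraction, that is, to find a single $y \in G \setminus \{e\}$ for which $g^n y g^{-n} \to e$ along the full sequence $n \to +\infty$. The key tool is that conjugation by each $g^m$ is a homeomorphism fixing $e$, so for every fixed $m \in \NN$ one has $g^{n_\alpha + m} x g^{-(n_\alpha+m)} \to e$, which means that the set of cluster points at $e$ of forward conjugation orbits is closed under conjugation by powers of $g$. Combined with local compactness of $G$---via a neighborhood basis of $e$ consisting of precompact symmetric sets---a diagonal-extraction argument inside such a neighborhood yields an element $y \in \con(g) \setminus \{e\}$, contradicting the hypothesis $\con(g) = \{e\}$. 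This final compactness-plus-diagonal step is the delicate heart of the argument and is where the local compactness of $G$ is essentially invoked; it is classical in the theory of contraction groups in locally compact groups, for which we refer to the references cited at the end of the preceding section.
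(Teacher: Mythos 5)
Your forward implication is correct and complete. For the converse, however, there is a genuine gap, and it sits exactly where you place it: the ``compactness-plus-diagonal step'' that you describe as the delicate heart of the argument is not an argument but a restatement of what has to be proved. From a net $g^{n_\alpha}xg^{-n_\alpha}\to e$ with $n_\alpha\to+\infty$ there is no visible mechanism for producing a single $y\ne e$ with $g^{n}yg^{-n}\to e$ along the \emph{full} sequence $n\to+\infty$: cluster points of the orbit need not themselves have contracting orbits, and extracting subnets inside a compact neighbourhood of $e$ only ever yields convergence along further subnets, never along all of $\NN$. Be aware that the paper itself does not prove this proposition either --- its entire proof reads ``See Corollary~4.13 in \cite{RajaShah}'' --- so deferring the hard direction to the literature is in effect what the authors do; but then one should say so plainly rather than suggest that a routine diagonal extraction closes the gap. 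The actual proof in \cite{RajaShah} rests on nontrivial structure theory of orbit closures and contraction groups.

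There is also a concrete flaw earlier in your reduction. Distality is symmetric in $g$ and $g^{-1}$ because the orbit ranges over all of $\ZZ$, but the target statement --- a nontrivial element of $\con(g)$, which by definition involves only $n\to+\infty$ --- is not symmetric. If the accumulation of the orbit at $e$ occurs only along $n_\alpha\to-\infty$, your construction would at best produce a nontrivial element of $\con(g^{-1})$, and that does not contradict the hypothesis $\con(g)=\{e\}$: triviality of $\con(g)$ and of $\con(g^{-1})$ are not interchangeable in general. For instance, in $\QQ_p\rtimes\ZZ$ with the generator $t$ acting on $\QQ_p$ by multiplication by $p$, one has $\QQ_p\subset\con(t)$ while $\con(t^{-1})=\{e\}$. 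So ``by the symmetric role of $g$ and $g^{-1}$ it suffices to treat $n_\alpha\to+\infty$'' cannot be justified as written; the two cases require separate treatment, and bridging them is part of the substance that your sketch leaves to the references.
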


\begin{proof}
See Corollary~4.13 in \cite{RajaShah}.
\end{proof}

In case the ambient group is totally disconnected, the following result, due to Baumgartner--Willis and Jaworski, implies that all three properties are equivalent.
\begin{proposition}
Let $G$ be a totally disconnected locally compact group. An element $g \in G$ is   equicontinuous if and only if   $\con(g) = 1$.
\end{proposition}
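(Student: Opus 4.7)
The plan splits the biconditional into an easy and a hard direction. The easy direction---equicontinuity implies $\con(g) = \{e\}$---holds in any Hausdorff locally compact group and does not actually use total disconnectedness. Indeed, given a basis $\{U_\alpha\}$ of $g$-invariant identity neighbourhoods and any $x \in \con(g)$, the convergence $g^n x g^{-n} \to e$ places some iterate $g^{n_0} x g^{-n_0}$ inside $U_\alpha$; conjugation invariance then gives $x \in g^{-n_0} U_\alpha g^{n_0} = U_\alpha$, and intersecting over $\alpha$ forces $x = e$.

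For the harder direction I would invoke the structure theory of totally disconnected locally compact groups. By van Dantzig's theorem there is a neighbourhood basis of $e$ consisting of compact open subgroups $V$. To each such $V$ one applies Willis' tidying procedure to obtain a compact open subgroup $U \leq V$ that is \emph{tidy} for the inner automorphism $\alpha_g : x \mapsto g x g^{-1}$; it decomposes as $U = U_+ U_-$ with $U_\pm := \bigcap_{n \geq 0} \alpha_g^{\mp n}(U)$, and the contraction group of $g$ can be read off from the associated nested unions $\bigcup_{n \geq 0} \alpha_g^{n}(U_-)$. The Baumgartner--Willis and Jaworski theory introduces the \emph{nub} $\mathrm{nub}(g)$ as the intersection of all tidy subgroups for $g$, proves that $\con(g) = \{e\}$ forces $\mathrm{nub}(g) = \{e\}$, and shows that triviality of the nub makes the tidy subgroups themselves $\alpha_g$-invariant. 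The family of $g$-invariant tidy subgroups then constitutes a neighbourhood basis of $e$, which is precisely equicontinuity of $g$.

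The principal obstacle is therefore the implication $\con(g) = \{e\} \Rightarrow \mathrm{nub}(g) = \{e\}$, together with the $\alpha_g$-invariance of tidy subgroups under a trivial nub. Both are substantive structural results contained in \cite{BaumWill} and \cite{Jaw}, so in practice I would close the argument by directly quoting the pertinent theorems rather than reproducing the tidying construction. This matches the framing in the excerpt, where the proposition is presented as an immediate consequence of the cited literature.
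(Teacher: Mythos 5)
Your proposal is correct and follows essentially the same route as the paper: the direction ``equicontinuous $\Rightarrow \con(g)=1$'' is handled by an elementary argument (yours in direct form, the paper's in contrapositive form, but they are the same computation using a $g$-invariant basic neighborhood and Hausdorffness), while the converse is delegated to Proposition~3.16 of \cite{BaumWill} together with Jaworski's extension \cite{Jaw} beyond the metrizable case. Your sketch of the tidying/nub machinery is reasonable background for what those citations contain, but since you ultimately close that direction by quoting the same results, the two proofs coincide in substance.
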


\begin{proof}
The `only if' part is clear, for by the property of the topology on $G$ we have for any $x\in G$ and identity neighborhood $U\in \Ne$ with $x\not\in U$, and then by the (total) roundness of $g$ there is another, $g$-invariant open identity neighborhood $e\in V\subset U \not\ni x$, so that for all $n\in \NN$ the conjugate $g^nxg^{-n}\not\in g^nUg^{-n}$, whence $\not\in g^nVg^{-n}=V$, and $e$ cannot be an accumulation point of the sequence $(g^nxg^{-n})_{n=1}^\infty$.

The converse follows from Proposition~3.16 in \cite{BaumWill} in case $G$ is metrizable. All the results from \cite{BaumWill} have however been extended to the general case by Jaworski \cite{Jaw}.
\end{proof}

\section{More general results on the Carath\'eodory-Fej\'er problem}\label{sec:mainresults}

\begin{theorem}\label{th:KrenciCFstrong} Let $G$ be any locally compact topological group, with unit element $e$ and let $\Om\subset G$ be an open (symmetric) neighborhood of $e$. Let also $z\in \Om$ be any torsion-free point (i.e. $o(z)=\infty$) which is \emph{round} in the sense of Definition \ref{def:round}. Then with $H:=H(\Om,z):=\{k\in \ZZ~:~ z^k \in \Om\}$ we have
\begin{align}\label{eq:KrenciCFstrong}
\CF^c_G(\Om,z) = \CF^{\#}_G(\Om,z) = \KK_G^{c}(\Om,z)=\KK_G^{\#}(\Om,z) =\CF(H).
\end{align}
\end{theorem}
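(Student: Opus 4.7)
The plan is to mirror the proof of Theorem \ref{th:KrenciCF} closely, with the role of Condition \ref{c:CF} now played by the roundness of $z$. Combining Proposition \ref{p:realpd}(ii), which yields $\CF(H) = \CF^c(H) = \CF^{\#}(H) = \KK^c(H) = \KK^{\#}(H)$, with the trivial chains $\KK^c_G \leq \KK^{\#}_G \leq \CF^{\#}_G$ and $\KK^c_G \leq \CF^c_G \leq \CF^{\#}_G$, the theorem reduces to the two inequalities $\CF^{\#}_G(\Omega,z) \leq \CF^{\#}(H)$ and $\CF^c(H) \leq \CF^c_G(\Omega,z)$; the real-valued setting is handled in exactly the same way, so I would focus on the complex one.

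The upper bound proceeds exactly as in the proof of Theorem \ref{th:KrenciCF}, since that step never actually used Condition \ref{c:CF} beyond the identification of the set of relevant exponents. Given $f \in \FF^{\#}_G(\Omega)$, restrict $f$ to the cyclic subgroup $Z := \langle z \rangle$ via Lemma \ref{l:furtherposdeffunctions}(i) and pull back along the isomorphism $\eta : \ZZ \to Z$, $\eta(k) := z^k$, obtaining $\psi := (f|_Z) \circ \eta$. With the new definition $H := \{k : z^k \in \Omega\}$, the inclusion $\supp \psi \subset H$ is automatic, so $\psi \in \FF^{\#}_\ZZ(H)$ and $|f(z)| = |\psi(1)| \leq \CF^{\#}(H)$ after taking supremum.

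For the lower bound I would follow the same blueprint. Starting from an arbitrary $\psi \in \FF^c(H)$ with $\supp \psi \subset [-N,N] \cap H$, I apply Fej\'er--Riesz (Lemma \ref{l:FR}) to write $\psi = \theta \star \widetilde{\theta}$ with $\supp \theta \subset [0,N]$, and for a continuous positive definite bump $f$ with $\supp f \Subset U$ (to be selected below) set
\[
F(x) := \sum_{j,k=0}^{N} \theta(k)\overline{\theta(j)}\, f(z^{-j} x z^k),
\]
which is positive definite by Lemma \ref{l:furtherposdeffunctions}(iv). The main obstacle, and the only step genuinely requiring a new idea compared to the proof under Condition \ref{c:CF}, is controlling $\supp F$ inside $\Omega$: in a noncommutative group the conjugates $z^{-j} U z^k$ need not lie anywhere near $z^{k-j}$, and so simply shrinking $U$ no longer suffices.

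This is exactly the difficulty that roundness dissolves. By Proposition \ref{p:invariance}, for any prescribed identity neighborhood $U$ I can take the bump $f$ to be continuous, positive definite, supported compactly in $U$, and invariant under conjugation by $z$, i.e.\ $f(z^{-1} x z) = f(x)$. Iterating the invariance yields $f(z^{-j} x z^k) = f(z^{k-j} x)$, which collapses the double sum, via $\sum_{k-j=n} \theta(k)\overline{\theta(j)} = \psi(n)$, into
\[
F(x) \;=\; \sum_{n \in \ZZ} \psi(n)\, f(z^n x),
\]
whose support lies in $\bigcup_{n \in \supp \psi} z^{-n}\, \supp f$. Since $\supp \psi \subset H$ is finite and $\Omega$ is symmetric (so $z^{-n} \in \Omega$ for every $n \in H$), shrinking $U$ into the open identity neighborhood $\bigcap_{n \in \supp \psi} z^n \Omega$ forces $\supp F \Subset \Omega$. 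A further shrinking of $U$ so as to avoid the finitely many non-identity powers $z^m$ with $m \in (\supp \psi \cup (\supp \psi + 1)) \setminus \{0\}$, possible because $o(z) = \infty$ makes these distinct from $e$, reduces both $F(e)$ and $F(z)$ to single terms, yielding $F(e) = f(e)$ and $F(z) = \overline{\psi(1)}\, f(e)$. Normalizing $F_0 := F/f(e) \in \FF^c_G(\Omega)$ then gives $|\psi(1)| = |F_0(z)| \leq \CF^c_G(\Omega,z)$, and taking supremum over $\psi$ closes the lower bound.
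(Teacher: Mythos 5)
Your proposal is correct and follows essentially the same route as the paper: the upper bound by restriction to $\langle z\rangle$, and the lower bound by taking a Fej\'er--Riesz square root of $\psi$, using Proposition \ref{p:invariance} (i.e.\ roundness) to produce a conjugation-invariant bump $f$ that collapses the double sum to $\sum_{n\in S}\psi(n)f(z^{n}x)$, and then shrinking $U$ so that the translated supports stay in $\Omega$ and the off-diagonal terms vanish at $e$ and $z$. Your choice of $U$ inside $\bigcap_{n\in S} z^{n}\Omega$ together with avoidance of the points $z^{m}$, $m\in(S\cup(S+1))\setminus\{0\}$, is a slightly streamlined but equivalent packaging of the paper's conditions (i)--(iii) on $U$.
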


Note that in view of Example \ref{ex:Abel} this result extends Theorem \ref{th:KrenciLCA}.

\begin{corollary}\label{c:zinftyalleq} For $G$ a locally compact group,
$\Om\subset G$ an open (symmetric) neighborhood of $e$, and
$z\in \Om$ any fixed \emph{round} point with $o(z)=\infty$, we
have
$\CF^c_G(\Om,z)=\KK_G^c(\Om,z)=\CF^\#_G(\Om,z)=\KK_G^\#(\Om,z)$,
the common value of which can thus be denoted simply by
$\CF_G(\Om,z)$.
\end{corollary}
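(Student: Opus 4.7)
My plan is to mirror the two-step structure of the proof of Theorem \ref{th:KrenciCF}, using the roundness of $z$ (instead of Condition \ref{c:CF}) to control supports. By Proposition \ref{p:realpd}(ii) all four quantities $\CF^c(H), \CF^{\#}(H), \KK^c(H), \KK^{\#}(H)$ coincide with $\CF(H)$, so it suffices to establish the two inequalities $\CF^{\#}_G(\Om,z)\le\CF^{\#}(H)$ and $\CF^c(H)\le\CF^c_G(\Om,z)$; the real-valued analogues follow by the same arguments.

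The upper bound $\CF^{\#}_G(\Om,z)\le\CF^{\#}(H)$ is transferred exactly as in Theorem \ref{th:KrenciCF}: any $f\in\FF^{\#}_G(\Om)$ restricts via Lemma \ref{l:furtherposdeffunctions}(i) to a positive definite function on the infinite cyclic subgroup $Z:=\langle z\rangle$, and the natural isomorphism $\eta:\ZZ\to Z$, $k\mapsto z^k$ (available because $o(z)=\infty$) pulls it back to $\psi\in\FF^{\#}_\ZZ(H)$ with $\psi(1)=f(z)$, giving $|f(z)|\le\CF^{\#}(H)$. Crucially, this step uses no conditions on $H$ beyond its definition as $\{k\in\ZZ:z^k\in\Om\}$.

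For the harder inequality $\CF^c(H)\le\CF^c_G(\Om,z)$, I would take $\psi\in\FF^c_\ZZ(H)$ (finitely supported, say $\supp\psi\subset[-N,N]\cap H$), invoke Fej\'er--Riesz (Lemma \ref{l:FR}) to write $\psi=\theta\star\widetilde\theta$ with $\supp\theta\subset[0,N]$, and form $F(x):=\sum_{j,k=0}^N \theta(k)\overline{\theta(j)}\,f(z^{-j}xz^k)$ for a carefully chosen auxiliary $f\gg 0$. The main obstacle here is support control: in the original proof, $\supp F\subset\bigcup_{j,k}z^j(\supp f)z^{-k}$ stays inside $\Om$ by Condition \ref{c:CF}, which ensures $z^n\in\Om$ for all $|n|\le N$. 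Without that condition, the individual cross-terms with $n=k-j\notin H$ (hence $z^n\notin\Om$) may not vanish even though the diagonal sum $\sum_{k-j=n}\theta(k)\overline{\theta(j)}=\psi(n)$ does, so $F$ could spill outside $\Om$. The remedy is to take $f$ invariant under conjugation by $z$: by Proposition \ref{p:invariance}, the roundness of $z$ yields, for every neighborhood $U\Subset\Om$ of $e$, a continuous nonzero $f\gg 0$ with $\supp f\Subset U$ and $f(z^{-1}xz)=f(x)$ for all $x\in G$. Conjugating the argument of $f$ by $z^j$ then produces the identity $f(z^{-j}xz^k)=f(xz^{k-j})$, which collapses $F$ to the diagonal form
\[
F(x)=\sum_{n=-N}^{N}\psi(n)\,f(xz^n)=\sum_{n\in H,\,|n|\le N}\psi(n)\,f(xz^n),
\]
precisely killing the problematic cross-terms. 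Choosing $U$ small enough that $Uz^{-n}\Subset\Om$ for each of the finitely many $n\in H\cap[-N,N]$ (possible because $z^{-n}\in\Om$ which is open) and that $z^n\notin U$ for $0<|n|\le N+1$ (possible since $o(z)=\infty$), I would then verify directly from the collapsed form that $\supp F\Subset\Om$, that $F\gg 0$ by Lemma \ref{l:furtherposdeffunctions}(iv), that $F(e)=\psi(0)f(e)=f(e)\ne 0$, and that $F(z)=\psi(-1)f(e)=\overline{\psi(1)}f(e)$. Hence $F/f(e)\in\FF^c_G(\Om)$ with $|(F/f(e))(z)|=|\psi(1)|$, and taking the supremum over $\psi\in\FF^c_\ZZ(H)$ gives the required bound. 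The whole reason roundness enters the hypothesis is exactly to effect this collapse of the double sum, forcing $\supp F$ to respect the combinatorial constraint $\supp\psi\subset H$ even when Condition \ref{c:CF} fails.
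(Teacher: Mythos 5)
Your argument is correct and is essentially the paper's own route: the corollary is obtained there as an immediate consequence of Theorem \ref{th:KrenciCFstrong}, whose proof you have reproduced -- the restriction to $\langle z\rangle$ for the upper bound, and for the lower bound the Fej\'er--Riesz factorization combined with a conjugation-invariant $f$ supplied by Proposition \ref{p:invariance} (roundness), collapsing the double sum to $\sum_{n\in\supp\psi}\psi(n)f(xz^n)$ so that the support stays in $\Om$. The only deviations are cosmetic (your convention $f(z^{-j}xz^k)$ versus the paper's $f(z^jxz^{-k})$, yielding $F(z)=\overline{\psi(1)}f(e)$ instead of $\psi(1)f(e)$, and a slightly more direct choice of the small neighborhood $U$), neither of which affects the conclusion.
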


If $z\in \Om$ is a cyclic (torsion) element, then the situation is described by the next result.

\begin{theorem}\label{th:KrenciStrongfini} Let $G$ be any locally compact topological group, with unit element $e$ and let $\Om\subset G$ be an open (symmetric) neighborhood of $e$. Let also $z\in \Om$ be any cyclic point with $o(z)=m<\infty$, and let $H_m:=H_m(\Om,z):=\{k\in \ZZ_m ~:~ z^k \in \Om\}$. Then we have
\begin{equation}\label{eq:KrenciStrongfini}
\CF^c_G(\Om,z) = \CF^{\#}_G(\Om,z) = \CF_m(H_m) \quad \text{\rm and}\quad \KK_G^{c}(\Om,z)=\KK_G^{\#}(\Om,z) =\KK_m(H_m).
\end{equation}
\end{theorem}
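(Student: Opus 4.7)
The plan is to follow the scheme of the proof of Theorem~\ref{th:KrenciCF}, replacing $\ZZ$ with $\ZZ_m$, Lemma~\ref{l:FR} with its finite analogue Lemma~\ref{l:FRfinite}, and exploiting that any torsion element of $G$ is automatically round (Example~\ref{ex:ozm}), so no extra roundness hypothesis is needed. The complex and real cases are parallel, so I describe only the former; the chain
\[
\CF_m(H_m)\;\le\;\CF^c_G(\Om,z)\;\le\;\CF^{\#}_G(\Om,z)\;\le\;\CF_m(H_m)
\]
will force equality throughout.

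For the easy inequality $\CF^{\#}_G(\Om,z)\le\CF_m(H_m)$, take any $f\in\FF^{\#}_G(\Om)$ and restrict it to the finite cyclic subgroup $Z:=\langle z\rangle\cong\ZZ_m$: by Lemma~\ref{l:furtherposdeffunctions}\emph{(i)}, $f|_Z\gg 0$ on $Z$, so the pullback $\psi:=f\circ\eta:\ZZ_m\to\CC$ along the isomorphism $\eta(k):=z^k$ is positive definite, satisfies $\psi(0)=1$, and has $\supp\psi\subset H_m$. Hence $\psi\in\FF_{\ZZ_m}(H_m)$ and $|f(z)|=|\psi(1)|\le\CF_m(H_m)$; taking the supremum over $f$ finishes this direction.

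For the reverse inequality, fix $\psi\in\FF_{\ZZ_m}(H_m)$ and extract a convolution square root $\theta:\ZZ_m\to\CC$ with $\theta\star\widetilde\theta=\psi$ via Lemma~\ref{l:FRfinite}. The target is to build, for a suitable continuous $f\gg 0$ supported in a small neighborhood of $e$,
\[
F(x):=\sum_{j,k\in\ZZ_m}\theta(k)\overline{\theta(j)}\,f(z^{-j}xz^k),
\]
positive definite by Lemma~\ref{l:furtherposdeffunctions}\emph{(iv)}. I would like $F(e)=f(e)$, $F(z)=\overline{\psi(1)}\,f(e)$, and $\supp F\Subset\Om$, so that $F_0:=F/f(e)\in\FF^c_G(\Om)$ yields $|\psi(1)|=|F_0(z)|\le\CF^c_G(\Om,z)$.

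The main obstacle is that Lemma~\ref{l:FRfinite}, unlike Lemma~\ref{l:FR}, gives no control on $\supp\theta$, which may exhaust $\ZZ_m$. Consequently the naive support estimate $\supp F\subset\bigcup_{j,k}z^jUz^{-k}$ would involve neighborhoods of $z^{j-k}$ for every difference $j-k\in\ZZ_m$, whereas only the elements $z^n$ with $n\in H_m$ are known to lie in $\Om$. To resolve this, I invoke the roundness of $z$ granted by Example~\ref{ex:ozm}: by Proposition~\ref{p:invariance}, for any prescribed open neighborhood $U$ of $e$ one obtains a nonzero continuous $f\gg 0$ with $\supp f\Subset U$ satisfying $f(z^{-1}xz)=f(x)$ for all $x\in G$. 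Iterating this invariance gives $f(z^{-j}xz^k)=f(xz^{k-j})$, and reindexing by $n:=k-j$ collapses the double sum to
\[
F(x)=\sum_{n\in\ZZ_m}\psi(n)\,f(xz^n)=\sum_{n\in H_m}\psi(n)\,f(xz^n).
\]
I now choose $U$ small enough that (a) each $(\supp f)\,z^{-n}\Subset\Om$ for the finitely many $n\in H_m$ (possible since the finite set $\{z^{-n}:n\in H_m\}$ sits inside the open $\Om$), and (b) the finitely many distinct points $z^n$ with $n\in\ZZ_m\setminus\{0\}$ all lie outside $U$. Then $\supp F\subset\bigcup_{n\in H_m}(\supp f)\,z^{-n}\Subset\Om$; in $F(e)=\sum_n\psi(n)f(z^n)$ only the $n=0$ term survives, giving $f(e)$; and in $F(z)=\sum_n\psi(n)f(z^{n+1})$ only $n\equiv-1$ survives, giving $\psi(-1)f(e)=\overline{\psi(1)}f(e)$, as required. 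For the real version one simply notes that the $f$ supplied by Proposition~\ref{p:invariance} has the form $\chi_V\star\chi_{V^{-1}}$ and is therefore real, so a real $\psi$ forces $F$ to be real, and the entire argument transfers verbatim with $\KK$ in place of $\CF$.
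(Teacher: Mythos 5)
Your proposal is correct and follows essentially the same route as the paper: the upper bound by restricting to $\langle z\rangle\cong\ZZ_m$ and pulling back via $\eta$, and the lower bound by taking a convolution square root from Lemma~\ref{l:FRfinite}, using that torsion elements are automatically round (Example~\ref{ex:ozm}, Proposition~\ref{p:invariance}) to get a conjugation-invariant bump $f$, and collapsing the double sum to $\sum_{n\in H_m}\psi(n)f(xz^n)$ so that the support lands in $\Om$. The only (harmless) cosmetic differences are the sign convention in the definition of $F$ (yielding $\overline{\psi(1)}f(e)$ instead of $\psi(1)f(e)$ at $z$) and your slightly leaner one-sided conditions on $U$ in place of the paper's conditions (i)--(iii).
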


Note that here we did not assume any extra condition, neither
\eqref{CFcondition} nor roundedness of $z$. Still, we obtain
the full strength of the result, and thus an unconditional
extension of the result of Theorem \ref{th:KrenciLCAfini}.
Dropping the roundedness condition, assumed for torsion-free
$z$, is possible here in view of Example \ref{ex:ozm}.

\begin{corollary}\label{c:ozfineq} For $G$ a locally compact group,
$\Om\subset G$ an open (symmetric) neighborhood of $e$, and
$z\in \Om$ any fixed point with $o(z)<\infty$, we still have
$\CF^c_G(\Om,z)=\CF^\#_G(\Om,z)$ and
$\KK_G^c(\Om,z)=\KK_G^\#(\Om,z)$, the common value of which can
thus be denoted by $\CF_G(\Om,z)$ and $\KK_G(\Om,z)$,
respectively.
\end{corollary}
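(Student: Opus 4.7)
The plan is to read the corollary directly off Theorem \ref{th:KrenciStrongfini}. That theorem establishes the stronger chains of equalities
\begin{equation*}
\CF^c_G(\Om,z) = \CF^{\#}_G(\Om,z) = \CF_m(H_m), \qquad \KK_G^{c}(\Om,z) = \KK_G^{\#}(\Om,z) = \KK_m(H_m),
\end{equation*}
in which the two identities claimed by the corollary ($\CF^c_G(\Om,z)=\CF^\#_G(\Om,z)$ and $\KK^c_G(\Om,z)=\KK^\#_G(\Om,z)$) appear already as the leftmost equalities in each chain. Once those two equalities are in hand, the remaining assertion that the common values may be denoted unambiguously by $\CF_G(\Om,z)$ and $\KK_G(\Om,z)$ is purely notational, so no further argument is needed.

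Accordingly, I expect no independent obstacle in the corollary itself; all the difficulty has been absorbed into Theorem \ref{th:KrenciStrongfini}. It is worth noting, however, that a more direct route to the corollary is also available, one which sidesteps the finite-group intermediary $\CF_m(H_m), \KK_m(H_m)$. Indeed, since $o(z) = m < \infty$, Example \ref{ex:ozm} automatically yields that $z$ is a \emph{round} element of $G$, and hence Proposition \ref{p:invariance} furnishes, inside any prescribed neighborhood of $e$, a nonzero continuous compactly supported positive definite function $h \gg 0$ that is invariant under conjugation by $z$. Given an arbitrary $f \in \FF^{\#}_G(\Om)$, one then restricts $f$ to the cyclic subgroup $Z := \langle z \rangle \cong \ZZ_m$ via Lemma \ref{l:furtherposdeffunctions}\emph{(i)}, extracts a convolution square-root of the resulting sequence on $\ZZ_m$ using Lemma \ref{l:FRfinite}, and reassembles a continuous compactly supported positive definite function on $G$ via Lemma \ref{l:furtherposdeffunctions}\emph{(iv)}, choosing the support neighborhood small enough to keep the finitely many translates $z^j U z^{-k}$ inside $\Om$ and to ensure that $z^n \notin U$ for $0 < |n| < m$.

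This scheme mirrors exactly the construction carried out in the proof of Theorem \ref{th:KrenciCF} in Section \ref{s:proof}, the two only modifications being that Lemma \ref{l:FR} must be replaced by its $\ZZ_m$-analog Lemma \ref{l:FRfinite}, and that the roundedness of $z$ (which had to be imposed as a hypothesis in the torsion-free Theorem \ref{th:KrenciCFstrong}) is now automatic via Example \ref{ex:ozm}. The only place one needs to be slightly careful is in choosing the compact conjugation-invariant identity neighborhood $U$: here the natural candidate $U := \bigcap_{j=0}^{m-1} z^j U_0 z^{-j}$ (for a given $U_0$) is itself open and $z$-invariant precisely because $m$ is finite. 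This step is trivial, so there really is no substantive difficulty beyond what is already contained in Theorem \ref{th:KrenciStrongfini}.
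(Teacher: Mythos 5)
Your proposal is correct and matches the paper: Corollary \ref{c:ozfineq} is stated without a separate proof precisely because the two claimed equalities appear verbatim as the leftmost links in the chains of Theorem \ref{th:KrenciStrongfini}, the rest being purely notational. Your sketched ``more direct route'' is not genuinely different---it is essentially the proof of Theorem \ref{th:KrenciStrongfini} itself (restriction to $Z\cong\ZZ_m$, Lemma \ref{l:FRfinite}, reconstruction via a conjugation-invariant bump supplied by Example \ref{ex:ozm} and Proposition \ref{p:invariance})---so nothing beyond citing that theorem is required.
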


\section{Proofs of the general results of \S \ref{sec:mainresults}}\label{sec:mainproofs}

\begin{proof}[Proof of Theorem \ref{th:KrenciCFstrong}] According to Proposition \ref{p:realpd} (i), (ii) we have $\CF^c(H)=\CF^{\#}(H)=\KK^c(H)=\KK^{\#}(H)$, hence it suffices to prove $\CF^c_G(\Om,z)=\CF^{\#}_G(\Om,z)=\CF(H)$ and $\KK^c_G(\Om,z)=\KK^{\#}_G(\Om,z)=\KK(H)(=\CF(H))$.

The complex and real cases are exactly similar, so we work out only the complex case. Again, in view of the obvious $\CF^{c}(H)\le \CF^{\#}(H)$, it suffices to prove two inequalities, the first being that $\CF^{\#}_G(\Om,z) \leq \CF(H)=\CF^{\#}(H)$.

Let now take any $f\in \FF^{\#}_G(\Om)$. Consider the subgroup $Z:=\langle z \rangle \leq G$. By Lemma \ref{l:furtherposdeffunctions} {\it{(i)}}, $g:=f|_Z \gg 0$ on $Z$, so we have defined a function $g \in \FF^{\#}_Z(\Om\cap Z)$. Consider the natural isomorphism $\eta:\ZZ\to Z$, which maps according to $\eta(k):=z^k$. Then $\eta$ carries over $g$, defined on $Z\le G$, to a function $\psi:=g\circ \eta: \ZZ\to \CC$, which is therefore positive definite on $\ZZ$, has normalized value $\psi(0)=g(e)=f(e)=1$, and $\supp \psi \subset H$ for $\supp g \subset (\supp f \cap Z) \subset (\Om \cap Z) = \{ z^k~:~z^k \in \Om\}=\eta(H)$ by definition of $H$.

From here we read that $|f(z)|=|g(z)|=|\psi(1)| \leq \sup \left\{ |\f(1)| ~:~  \f \in \FF^{\#}_\ZZ(H) \right\} =\CF^{\#}(H)$. Taking $\sup_{f\in \FF^{\#}_G(\Om)}$ on the left hand side concludes the proof of the first part.

\smallskip

Note that in this part we did not use the condition of "roundedness", and we could circumvent any compactness (or finiteness on $Z$) requirement on the support of $f\in \FF^{\#}_G(\Om)$. This is a key issue to obtain finally also $\CF^{\#}_G(\Om,z)=\CF^{c}_G(\Om,z)$ as a consequence of the two inequalities proved.

\smallskip
Now we turn to the second inequality, i.e. we show
$\CF(H)(=\CF^c(H) )\leq \CF^{c}_G(\Om,z)$.

So let us consider an arbitrary function $\psi\in \FF^c(H)$.
Here it is of relevance, that we have the finiteness assumption
on $S:=\supp \psi$, which is thus included in $[-N,N]$ for some
$N\in \NN$. This finiteness guarantees not only that $\psi \gg
0$, but also that its Fourier transform consists of finitely
many terms and so is a trigonometric polynomial
$T(t):=\widecheck{\psi}(t)=\sum_{n=-N}^N \psi(n) e^{2\pi i n t}$.

The next idea here is that as $\psi\gg 0$, we have that $T$ is
nonnegative, and we can invoke the classical theorem of Fej\'er
and Riesz to represent it as a square $T(t)=|P(t)|^2$. Using
the above described preliminaries precisely, we can apply Lemma
\ref{l:FR} to $\psi\gg 0$. Thus we find another sequence $\theta:[0,N]\to \CC$
such that $\psi(n)=(\theta\star\widetilde{\theta})
(n)=\sum_{k=\max(n,0)}^{\min(N,N+n)}
\theta(k)\overline{\theta(k-n)}~(|n|\leq N)$.

Let us consider some compact neighborhood $U$ of $e$. Observe
that for any given value $-N\leq n \leq N$ we have at most $N+1$
different pairs $0\leq k,j \leq N$ with $k-j=n$ (while for
$|n|>N$ we have none). If $k-j=n$, then $z^{-j}Uz^k$ is the
continuous image of the compact set $U$ under the continuous
homeomorphism $x\to z^{-j}xz^k$. These continuous mappings take
$e$ to $z^n$, hence $z^{-j}Uz^k$ is a compact neighborhood of
$z^n$, and so are the sets
$$
U_n:= \bigcup \{z^{-j}Uz^k~:~ 0\leq j,k\leq N, k-j=n\}=\bigcup_{k=\max(n,0)}^{\min(N,N+n)} z^{n-k}Uz^k=z^n \bigcup_{k=\max(n,0)}^{\min(N,N+n)} z^{-k}Uz^k
$$
for all $n=-N,\dots,N$.

However, we cannot postulate here that all these $z^n\in \Om$, and even the less that $U_n\Subset \Om$. The only fact we have for sure is that \emph{whenever
$\psi(n)\ne 0$, the point $z^n$ belongs to $\Om$.} This is so
because $\psi(n)\ne 0$ implies that $n\in S:=\supp \psi \subset
H=H(\Om,z)$, which means by construction that $z^n\in \Omega$.
The difficulty here is that even if $n$ can occur as $k-j$ with
$k,j\in \supp \theta$, it may still happen that
$\psi(n)=(\theta\star\widetilde{\theta}) (n)=\sum_{0\leq
k,j\leq N,~k-j=n}\theta(k)\overline{\theta(j)} =0$, and so we
have no guarantee that $z^n\in \Omega$.

So before proceeding, let us clarify what conditions we assume
when choosing the initial set $U$. We require the following to
hold.
\begin{itemize}
\item[(i)] $U^*:=\bigcup_{k=0}^N z^{-k}Uz^k \subset \Om$;
\item[(ii)] $U_n^*:=z^n U^* \subset \Om$ whenever $n\in S$;
\item[(iii)] If $0<|n|\leq N$, then $z^n\not\in U^*$, i.e.
    $e\not\in z^{-n}U^*=U_{-n}^*$; still equivalently,
    $z^n\not\in U^*_k$ unless $n=k$.
\end{itemize}

Choosing $U$ a sufficiently small neighborhood of $e\in\Om$,
together with $U$ also the homeomorphic images $z^{-k}Uz^k$ of
$U$ will stay within $\Om$, hence (i) is easy to satisfy.

Now if $n\in S\subset H$, then also $z^n\in \Om$, i.e. $e\in
z^{-n} \Om$, and for fixed $n$ a suitably small choice of $U$
can ascertain that the $N+1$ homomorphic images $z^{-k}Uz^k$ of
$U$ all stay within $z^{-n}\Om$ -- that is, $z^n U^* \subset
\Om$. Considering the necessary restrictions for the finitely
many elements $n\in S$ it is also possible to simultaneously
satisfy (ii) for all $n\in S$.

Finally, consider (iii). Let us take $\Om^*:=\Om\setminus
\{z^n~:~ 0<|n|\leq N\}$, which is a finite modification of
$\Om$, and is still a symmetric open neighborhood of $e$ (for
$z^n\ne e$ if $0<|n|\le N$). So arguing as in (i), we can thus
ascertain that $U^*\subset \Om^*$, whence $z^n\not\in U^*$, and (iii) holds.

\smallskip

Next, let $f$ be any continuous and positive definite function, supported compactly in the above chosen $U \Subset \Om$, and invariant under the effect of conjugation by $z$. By Proposition \ref{p:invariance} such an $f$ exists, for $z\in G$ was assumed to be a round element of $G$. Therefore, according to Lemma \ref{l:furtherposdeffunctions} \emph{(iv)}, also the function $F(x):=\sum_{j=0}^N \sum_{k=0}^N \theta(k)\overline{\theta(j)} f(z^{j}xz^{-k})$ is continuous and positive definite, moreover, we obviously have $\supp F \Subset \bigcup_{j,k=0}^N z^{-j}Uz^k=\bigcup_{n=-N}^N U_n\subset \bigcup_{n=-N}^N U_n^*$, hence $F$ is compactly supported.

We, however, need more, as we also need $\supp F \Subset \Om$. Therefore we compute the support of $F$ more precisely, showing that it in fact lies within $\Om$. To this end let us calculate $F(x)$ for a general point $x$, using the invariance of $f$ with respect to conjugation by $z$. We compute
\begin{align}\label{eq:Fcompu}
F(x)&=\sum_{j,k=0}^N \theta(k)\overline{\theta(j)} f(z^{j}x z^{-k}) =
\sum_{j,k=0}^{N} \theta(k)\overline{\theta(j)} f(z^{-k+j}x)\notag
\\& =\sum_{n=-N}^N \sum_{k=\max(n,0)}^{\min(N,N+n)} \theta(k)\overline{\theta(k-n)} f(z^{-n} x) = \sum_{n=-N}^N \psi(n) f(z^{-n} x) = \sum_{n\in S} \psi(n) f(z^{-n} x),
\end{align}
for $n\not\in S$ provides $\psi(n)=0$, hence the term drops
out. Thus we can describe the support of $F$ better: $\supp F \Subset \bigcup_{n\in S} \supp f(z^{-n}x) = \bigcup_{n\in S} ~z^{n} \supp f \Subset \bigcup_{n\in S} U_{n}^*
\Subset \Om$ in view of (ii).

Therefore, with $F_0:=\frac{1}{F(e)} F$, we obtain $F_0\in
\FF^c(\Om)$, noting that $F(e)\ne 0$, as $F$ is not identically
zero and $F\gg 0$. Actually, we can compute the precise values
of $F(e)$ and $F(z)$ easily: $F(e)=\sum_{n\in S} \psi(n)
f(z^{-n})=\psi(0)f(e)=f(e)$ as $z^n\notin U$ for
$0<|n|\leq N$ by (iii), and $F(z)=\sum_{n\in S} \psi(n)
f(z^{-n+1})=\psi(1)f(e)$, for the same reason.
Finally, $F_0(z)=\frac{1}{F(e)} F(z)=\frac{1}{f(e)}f(e) {\psi(1)}={\psi(1)}$, whence $ |\psi(1)|=|F_0(z)| \leq \sup \{|\phi(z)|~:~ \phi \in \FF^c_G(\Om)\}=\CF^c_G(\Om,z)$.
Taking supremum on the left hand side over all $\psi\in
\FF^c_\ZZ(H)$ thus leads to $\CF^c(H)\leq \CF^c_G(\Om,z)$.
\end{proof}

\begin{proof}[Proof of Theorem \ref{th:KrenciStrongfini}]
The proof is rather similar to the preceding one, once we carefully change all references from $\ZZ$ to $\ZZ_m$, $\CF(H)$ to $\CF_m(H)$ and $\FF_\ZZ^{\#}(\Om,z)$ and $\FF_\ZZ^{c}(\Om,z)$ to $\FF_{\ZZ_m}(\Om,z)$, and note that $Z:=\langle z \rangle$ is now only a finite subgroup with $Z\cong \ZZ_m$, so the natural isomorphism $\eta(k):=z^k$ acts between $\ZZ_m$ and $Z$ now. Let us briefly work out the main steps of the argument.

Note that $H:=H(\Om,z)$ is finite  together with the group $\ZZ_m$, so of course there is no difference between the formulations with respect to continuity or compactness assumptions, and the notations $\CF_m(H)$ and $\KK_m(H)$ of \eqref{eq:CFOmZ} and \eqref{CFKKconstants} are in effect.

Again it suffices to prove $\CF^c_G(\Om,z)=\CF^{\#}_G(\Om,z)=\CF_m(H)$ and $\KK^c_G(\Om,z)=\KK^{\#}_G(\Om,z)=\KK_m(H)$. As before, the complex and real cases are entirely similar, so again we work out only the complex case. Once again, as $\CF^c_G(\Om,z)\leq \CF^{\#}_G(\Om,z)$, we are to prove only two inequalities, the first being that $\CF^{\#}_G(\Om,z) \leq \CF_m(H)$.

Now take any $f\in \FF^{\#}_G(\Om)$. In view of Lemma \ref{l:furtherposdeffunctions} {\it{(i)}}, $g:=f|_Z \gg 0$ on $Z$, thus we have defined a function $g \in \FF_Z((\Om\cap Z))$. Note that as $o(z)=m$, we have $Z\simeq \ZZ_m$, and in particular finite. The natural isomorphism $\eta:\ZZ_m\to Z$ carries over $g$, defined on $Z\le G$, to a positive definite function $\psi:=g\circ \eta: \ZZ_m\to \CC$ on $\ZZ_m$, which has normalized value $\psi(0)=g(e)=f(e)=1$, and satisfies $\supp \psi \subset H$ for $\supp g \subset (\supp f \cap Z) \subset (\Om \cap Z) = \{ z^k~:~ z^k\in \Om\}=\eta(H)$ by definition of $H:=H(\Om,z)$ and $\eta:\ZZ_m\to Z$.

From here we read that $|f(z)|=|g(z)|=|\psi(1)| \leq \sup \left\{ |\f(1)| ~:~  \f \in \FF_{\ZZ_m}(H) \right\} =\CF_m(H)$. Taking $\sup_{f\in \FF^{\#}_G(\Om)}$ on the left hand side concludes the proof of the first part.

\medskip

It remains to show $\CF_m(H) \leq \CF^{c}_G(\Om,z)$, so the proof hinges upon the construction to an arbitrary function $\psi\in \FF_{\ZZ_m}(H)$ another function $F_0\in \FF^{c}_G(\Om,z)$ with $|\psi(1)|\leq |F_0(z)|$. Denote $S:=\supp \psi \subset \ZZ_m$.

As before, we manipulate using the convolution square-root of $\psi$, provided now by Lemma \ref{l:FRfinite}. We thus get the sequence $\theta:\ZZ_m \to \CC$ with $\psi(n)=\sum_{k\mod m} \theta(k)\overline{\theta(k-n)} =(\theta\star\widetilde{\theta}) (n)$.

However, now without any further condition we can always find for any given neighborhood $U$ of $e$ a function $f\gg 0$, continuous and supported compactly in $U$, with the invariance property with respect to conjugation by $z$. Indeed, according to the above Proposition \ref{p:invariance}, such a function exists for any neighborhood $U$ of $e$ if and only if $z\in G$ is a round element: however, that holds true in view of Example \ref{ex:ozm}. So we have such a function $f$.

At this point, as in the previous proof for Theorem \ref{th:KrenciCFstrong}, we chose a proper neighborhood $U$ of $e$, satisfying similarly to (i), (ii), and (iii) there the following analogous properties.
\begin{itemize}
\item[(i)] $U^*:=\bigcup_{k\in\ZZ_m} z^{-k}Uz^k \subset \Om$;
\item[(ii)] $U_n^*:=z^n U^* \subset \Om$ whenever $n\in S$;
\item[(iii)] If $0\ne n \in \ZZ_m$, then $z^n\not\in U^*$, i.e.
    $e\not\in z^{-n}U^*=U_{-n}^*$; still equivalently,
    $z^n\not\in U^*_k$ unless $n=k \mod m$.
\end{itemize}

Choosing $U$ a sufficiently small neighborhood of $e\in\Om$, for any $k\in \ZZ_m$ together with $U$ also the homeomorphic image $z^{-k}Uz^k$ of $U$ will stay within $\Om$, hence (i) holds for $U^{*}$.

Now if $n\in S\subset H$, then also $z^n\in \Om$, i.e. $e\in
z^{-n} \Om$, and for fixed $n$ a suitably small choice of $U$
can ascertain that the $m$ homomorphic images $z^{-k}Uz^k$ of
$U$ all stay within $z^{-n}\Om$ -- that is, $z^n U^* \subset
\Om$. Considering the necessary restrictions for the finitely
many elements $n\in S$ it is also possible to simultaneously
satisfy (ii) for all $n\in S$.

Finally, consider (iii). Let us take $\Om^*:=\Om\setminus
\{z^n~:~ 0\ne n \mod m\}$, which is a finite modification of
$\Om$, and is still a symmetric open neighborhood of $e$ (for
$z^n\ne e$ if $0\ne n \mod m$). So arguing as in (i), we can thus
ascertain that $U^*\subset \Om^*$, whence $z^n\not\in U^*$, and (iii) holds.

\smallskip

Finally, we define the new function $F(x):=\sum_{j\mod m} \sum_{k\mod m} \theta(k)\overline{\theta(j)} f(z^{j}xz^{-k})$, with the above $f$ as before. Then exactly as above, we find $F\gg 0$, $F\in C(G)$, $\supp F \subset \Om$, $F(e)=f(e)$ and $F(z)={\psi(1)} f(e)$, thus $F_0:=\frac{1}{F(e)} F \in \FF^{c}_G(\Om,z)$ and $|F_0(z)|=|\psi(1)|$, concluding the argument also in this case.
\end{proof}

\end{document}